\newtheorem{thm}{Theorem}[section]
\newtheorem{proposition}[thm]{Proposition}
\newtheorem{lemma}[thm]{Lemma}
\newtheorem{cor}[thm]{Corollary}
\newtheorem*{problem*}{Problem}
\theoremstyle{definition}
\newtheorem{ex}[thm]{Example}
\newtheorem{remark}[thm]{Remark}
\newcommand {\zmod}[1]{\mathbb{Z}/_{\!#1}}
\title[On equivariant and invariant topological complexity]
{On equivariant and invariant topological complexity of smooth $\mathbb{Z}/\!_p$-spheres}
\author{Zbigniew Błaszczyk}
\email{blaszczyk@amu.edu.pl}
\address{Faculty of Mathematics and Computer Science\\
Adam Mickiewicz University\\
Umultowska 87\\
61-614 Poznań, Poland}
\thanks{The authors have been supported by the National Science Centre grants: \textsc{2014/12/S/ST1/00368} and \textsc{2015/19/B/ST1/01458}, respectively.}
\author{Marek Kaluba}
\email{kalmar@amu.edu.pl}
\address{Faculty of Mathematics and Computer Science\\
Adam Mickiewicz University\\
Umultowska 87\\
61-614 Poznań, Poland}
\address{
Institute of Mathematics\\
Polish Academy of Sciences\\
Śniadeckich 8\\
00-656 Warszawa, Poland
}
\subjclass[2000]{Primary 57S17, 57S25; Secondary 55M30.}
\keywords{equivariant topological complexity, homology sphere, smooth action, Lusternik--Schnirelmann $G$\nobreakdash-category.}
\begin{document}

\begin{abstract}
We investigate equivariant and invariant topological complexity of spheres endowed with smooth non-free actions of cyclic groups of prime order. We prove that semilinear $\mathbb{Z}/_{\!p}$-spheres have both invariants either $2$ or~$3$ and calculate exact values in all but two cases. On the other hand, we exhibit examples which show that these invariants can be arbitrarily large in the class of smooth $\mathbb{Z}/_{\!p}$-spheres.
\end{abstract}

\maketitle

\section{Introduction}

Consider the space $X$ of all possible configurations of a mechanical system. The motion planning problem is to describe a continuous algorithm (``motion planner'') which, given a pair \mbox{$(x,y) \in X \times X$}, outputs a continuous path in~$X$ between $x$ and $y$.
In order to measure discontinuity of the process of motion planning, Farber \cite{Farber2003a} introduced the notion of topological complexity of $X$ in the following manner. An open subset $U \subseteq X \times X$ is a \textit{domain of continuity} if a motion planner exists over~$U$; \textit{topological complexity} of $X$ is the minimal number of domains of continuity whose union covers $X \times X$. (See Section \ref{sect:TC} for a more verbose definition.)
Due to its applications in topological robotics --- its knowledge is of practical use when designing optimal motion planners --- and close relation to Lusternik--Schnirelmann category, topological complexity has attracted plenty of attention in recent years.

Thus it is perhaps natural that there appeared versions of topological complexity aimed at exploiting the presence of a group action: ``equivariant topological complexity'' ($TC_G$) and ``invariant topological complexity'' ($TC^G$). The former was defined by Colman and Grant~\cite{Colman2012}, the latter by Lubawski and Mar\-zan\-to\-wicz \cite{Lubawski2014}. The founding ideas of these two invariants are quite different. Roughly speaking, $TC_G$ records the minimal number of domains of continuity which preserve symmetries,
while $TC^G$ tries to take advantage of symmetries to ease the effort of motion planning. It seems that both have their strengths and quirks. (See Sections \ref{sect:ETC} and \ref{sect:ITC} for more details.)


The aim of this paper is to investigate equivariant and invariant topological complexity of spheres endowed with linear, and then general smooth $\zmod{p}$\nobreakdash-\hspace{0pt}actions ($p$~is any prime). We prove that if the fixed point set of a smooth $\zmod{p}$\nobreakdash-\hspace{0pt}action on~$S^n$ is homeomorphic to $S^k$, $0<k\leq n$, then, subject to a minor technical assumption if $k=n-2$, both invariants are equal to either $2$ or~$3$ (Proposition~\ref{prop:semilinear}). This happens, in particular, whenever the action is linear. In that case we calculate exact values of $TC_{\zmod{p}}$ and $TC^{\zmod{p}}$, with two exceptions for the latter invariant (Theorems \ref{thm:ETC-of-spheres} and \ref{thm:ITC-of-spheres}, respectively).

On the other hand,
given any $n\geq 5$, we construct a smooth $\zmod{p}$\nobreakdash-\hspace{0pt}sphere~$S^n$ with the fixed point set an essential homology $(n-2)$\nobreakdash-\hspace{0pt}sphere, thus showing that $TC_{\zmod{p}}$ and $TC^{\zmod{p}}$ can be arbitrarily large in the class of smooth $\zmod{p}$\nobreakdash-\hspace{0pt}spheres with non-empty fixed point sets (Proposition \ref{prop:ITC-arbitrarily-large}).

The paper is organized as follows. Section \ref{sec:Preliminaries} is devoted to preliminaries. We review definitions of topological complexity and its equivariant counterparts, and compare some of the relevant properties of $TC_G$ and $TC^G$. In particular, we provide a lower bound for $TC^G$ in terms of Lusternik--Schnirelmann $G$\nobreakdash-\hspace{0pt}category, a result that was previously known for $TC_G$. In Section \ref{sec:Linear-spheres} we take a closer look at behaviour of $TC_G$ and $TC^G$ on spheres with linear actions of any finite group $G$, and then specialize to the case $G= \zmod{p}$. In Section \ref{sec:Smooth-actions} we explore both invariants in the realm of general smooth $\zmod{p}$\nobreakdash-\hspace{0pt}actions on spheres. We also hint at possible further direction of research. In the appendix we briefly discuss the existence of nowhere-vanishing equivariant vector fields --- this is indispensable for our proof of Theorem \ref{thm:ETC-of-spheres}.\medskip

\noindent\textbf{Notation.} We use the non-reduced version of Lusternik--Schni\-rel\-mann category (``LS category''), topological complexity, and their equivariant counterparts, so that, for example, $\textnormal{cat}(S^n)=2$.

\section{Preliminaries}\label{sec:Preliminaries}

Throughout this section $G$ stands for a compact Hausdorff topological group.

\subsection{Topological complexity}\label{sect:TC}

Let $PX$ be the space of all (continuous) paths in a topological space $X$, with the compact-open topology. Write $\pi \colon PX \to X \times X$ for the path fibration
\[ \pi(\gamma) = \big(\gamma(0), \gamma(1)\big) \textnormal{ for $\gamma \in PX$,} \]
associating to any path $\gamma \in PX$ its initial and terminal points. A \textit{motion planner} on an open subset $U \subseteq X \times X$ is defined to be a local section of $\pi$, i.e., a map $s \colon U \to PX$ such that $\pi \circ s$ is the inclusion $i_U \colon U \hookrightarrow X\times X$.

\textit{Topological complexity} of $X$, denoted $TC(X)$, is the least integer $k\geq 1$ such that there exists an open cover of $X \times X$ by $k$ sets which admit motion planners. Clearly, a necessary condition for $TC(X)$ to be finite is path-connectedness of~$X$.

Topological complexity is well known to be invariant under homotopy equivalences. It is straightforward to see that a ``global'' motion planner on $X$ exists if and only if $X$ is contractible or, in other words, $TC(X)=1$ if and only if $X$ is contractible. Furthermore, Grant, Lupton and Oprea proved:

\begin{thm}[{\cite[Corollary 3.5]{Grant2013}}]\label{TC_2}
Let $X$ be a path-connected CW-complex with finitely many cells in each dimension. Then $TC(X)=2$ if and only if $X$ is homotopy equivalent to an odd-dimensional sphere.
\end{thm}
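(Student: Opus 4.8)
I would split the statement into its two implications, treating the reverse direction as classical and concentrating on the forward one. If $X \simeq S^{2m+1}$, then $TC(X)=TC(S^{2m+1})=2$ by homotopy invariance of $TC$ together with Farber's computation of the topological complexity of spheres \cite{Farber2003a} (in the non-reduced convention, $TC(S^n)=2$ exactly when $n$ is odd), so this direction requires no new ideas. All the content lies in proving that $TC(X)=2$ forces $X$ to be an odd sphere.

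Assuming $TC(X)=2$, I would first extract two structural facts. Since $\mathrm{cat}(X)\le TC(X)=2$ and $X$ is not contractible (contractibility would give $TC(X)=1$), we have $\mathrm{cat}(X)=2$; equivalently the reduced category equals $1$, so $X$ is a co-$H$-space. This buys two things at once: all cup products of elements of $\tilde H^{*}(X;\mathbb F)$ vanish, for every field $\mathbb F$, and $\pi_1(X)$ is free. Next I would invoke the zero-divisor cup-length lower bound $TC(X)\ge \mathrm{zcl}(X)+1$, so that $\mathrm{zcl}(X)\le 1$: every product of two zero-divisors in $H^{*}(X;\mathbb F)\otimes H^{*}(X;\mathbb F)$ vanishes.

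The cohomological heart of the argument is then to squeeze all topology out of this vanishing. For $u,v\in\tilde H^{*}(X;\mathbb F)$ the canonical zero-divisors $\bar u = 1\otimes u - u\otimes 1$ and $\bar v = 1\otimes v - v\otimes 1$ must satisfy $\bar u\,\bar v=0$; expanding and using that cup products vanish leaves the relation
\[ u\otimes v + (-1)^{|u|\,|v|}\,v\otimes u = 0 . \]
If $u$ and $v$ lie in different degrees, or are linearly independent in a common degree, then $u\otimes v$ and $v\otimes u$ are linearly independent in $H^{*}(X;\mathbb F)\otimes H^{*}(X;\mathbb F)$, which is incompatible with this identity; hence $\tilde H^{*}(X;\mathbb F)$ is at most one-dimensional over every field. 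Taking $u=v$ in a single even degree yields $2\,u\otimes u=0$, impossible over $\mathbb Q$, so over $\mathbb Q$ the unique nonzero class sits in an odd degree $d$. A universal-coefficient argument removes all torsion (nontrivial $p$-torsion would make $\tilde H^{*}(X;\mathbb F_p)$ at least two-dimensional) and excludes the acyclic case (an acyclic co-$H$-space has free, abelianized-trivial, hence trivial $\pi_1$, so is simply connected and therefore contractible, giving $TC=1$). Thus $X$ is an integral homology $d$-sphere with $d$ odd.

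Finally I would upgrade ``homology sphere'' to ``homotopy sphere''. Since $X$ is a co-$H$-space its fundamental group is free, and $H_1(X)=0$ for $d\ge 2$ forces this free group to have trivial abelianization, hence to be trivial, so $X$ is simply connected (the case $d=1$ gives $X\simeq S^1$ directly, as $\pi_1\cong\mathbb Z$ with contractible universal cover). For $d\ge 3$, Hurewicz produces a generator $S^d\to X$ of $\pi_d(X)\cong H_d(X)\cong\mathbb Z$ inducing an isomorphism on integral homology, and Whitehead's theorem promotes it to a homotopy equivalence $X\simeq S^d$. I expect the genuinely delicate points to be the passage from the tensor relation to one-dimensionality of cohomology over all fields, and the control of $\pi_1$: the cup-length estimate coming from $\mathrm{cat}(X)=2$ cannot by itself exclude homology spheres with nontrivial perfect fundamental group (the Poincaré sphere has cup-length $1$ yet is not an odd sphere), and it is precisely the co-$H$-space structure, rather than merely the vanishing of cup products, that rescues the conclusion.
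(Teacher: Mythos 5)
The paper offers no proof of this statement: it is imported verbatim from Grant--Lupton--Oprea \cite[Corollary 3.5]{Grant2013} and used as a black box in Corollary~\ref{TC_arbitrary} and Proposition~\ref{TCG_2}. So the only meaningful comparison is with the cited source, and your outline does follow essentially that strategy: $\mathrm{cat}(X)=2$ makes $X$ a non-contractible co-$H$-space with free fundamental group, the zero-divisor cup-length bound collapses $\tilde H^{*}(X;\mathbb F)$ to a single one-dimensional piece, the rational argument places it in odd degree, and universal coefficients plus Hurewicz--Whitehead finish the simply connected case. That part of your argument is correct, including the sign computation and the elimination of torsion and of the acyclic case.

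There is, however, one genuine gap: the case $d=1$, i.e.\ $\pi_1(X)\cong\mathbb Z$ and $H_*(X)\cong H_*(S^1)$. You assert that the universal cover is contractible, but this does \emph{not} follow from $X$ being a finite-type homology circle with infinite cyclic fundamental group. For a concrete failure of that implication, attach a $3$-cell to $S^1\vee S^2$ along the element $t-2\in\pi_2(S^1\vee S^2)\cong\mathbb Z[t,t^{-1}]$: the resulting finite complex $Y$ has $\pi_1(Y)\cong\mathbb Z$ and $H_*(Y)\cong H_*(S^1)$, yet $H_2(\tilde Y)\cong\mathbb Z[t,t^{-1}]/(t-2)\cong\mathbb Z[1/2]\neq 0$ (the point being that $t-1$ acts invertibly on this module, so both its invariants and coinvariants vanish and contribute nothing to $H_*(Y)$). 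What rescues the conclusion in your setting is, once again, the co-$H$ structure, but via a nontrivial theorem you do not invoke: for a finite-type co-$H$-space the homology of the universal cover is a free $\mathbb Z[\pi_1]$-module in each positive degree (equivalently, such a space splits up to homotopy as a wedge of circles and a simply connected co-$H$-space). Granting that, the two-column Cartan--Leray spectral sequence exhibits the coinvariants $H_q(\tilde X)_{\pi_1}$ as a subgroup of $H_q(X)=0$ for $q\geq 2$, freeness then forces $H_q(\tilde X)=0$, and $X\simeq K(\mathbb Z,1)\simeq S^1$. Without an input of this kind (or an alternative argument, e.g.\ with local coefficients), your parenthetical ``contractible universal cover'' is precisely the point where homology circles that are not circles could slip through --- the non-simply-connected analogue of the Poincar\'e-sphere worry you correctly raise at the end.
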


\begin{ex}[{\cite[Theorem 8]{Farber2003a}}]\label{TC_spheres}
Recall that topological complexity of the sphere~$S^n$, $n\geq 1$, is given by:
\[ TC(S^n) = \begin{cases}
2, & \textnormal{$n$ odd,}\\
3, & \textnormal{$n$ even.}
\end{cases} \]
\end{ex}

We conclude this section by recalling that topological complexity is closely related to LS category. In fact, for any path-connected and paracompact space~$X$, we have:
\[ \textnormal{cat}(X) \leq TC(X) \leq 2\,\textnormal{cat}(X)-1.\]





\subsection{Equivariant topological complexity $TC_G$}\label{sect:ETC}

Given a $G$\nobreakdash-\hspace{0pt}space X, we will view $PX$ and $X\times X$ as $G$\nobreakdash-\hspace{0pt}spaces via the formulas:
\[ (g\gamma)(-)=g\big(\gamma(-)\big) \textnormal{ for $g \in G$ and $\gamma \in PX$,} \]
and
\[ g(x,y)=(gx,gy) \textnormal{ for $g\in G$, $x \in X$ and $y\in Y$,} \]
respectively. The path fibration $\pi \colon PX \to X\times X$ is then a $G$\nobreakdash-\hspace{0pt}fibration.

\textit{Equivariant topological complexity} of $X$, written $TC_G(X)$, is the least integer $k\geq 1$ such that there exists an open $G$\nobreakdash-\hspace{0pt}invariant cover of $X \times X$ by $k$ sets which admit $G$\nobreakdash-\hspace{0pt}equivariant motion planners (i.e. motion planners which are $G$\nobreakdash-\hspace{0pt}equivariant maps).

Given a closed subgroup $H \subseteq G$, let $X^H=\{x \in X \;|\; \textnormal{$hx = x$ for any $h \in H$}\}$ be the \textit{$H$\nobreakdash-\hspace{0pt}fixed point set} of $X$. We record that:

\begin{proposition}[{\cite[Corollary 5.4]{Colman2012}}]\label{CG_TCG_lower}
Let $X$ be a $G$\nobreakdash-\hspace{0pt}space. For any closed subgroup $H \subseteq G$, we have
\[ TC(X^H)\leq TC_G(X). \]
In particular, $TC(X)\leq TC_G(X)$.
\end{proposition}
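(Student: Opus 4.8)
The plan is to prove the key inequality $TC(X^H) \leq TC_G(X)$ first, and then obtain the final ``in particular'' clause by specializing to the trivial subgroup. The central observation is that $H$-fixed points behave well under the $G$-structures in play: if $x, y \in X^H$ and $\gamma$ is a path from $x$ to $y$ lying in the image of a $G$-equivariant motion planner, then $\gamma$ should itself live in $X^H$, so that restricting an equivariant motion planner to the fixed-point set yields an ordinary motion planner on $X^H$.

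\medskip

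First I would set $k = TC_G(X)$ and fix an open $G$-invariant cover $U_1, \dots, U_k$ of $X \times X$ together with $G$-equivariant motion planners $s_i \colon U_i \to PX$. The first step is to record that $(X \times X)^H = X^H \times X^H$, which is immediate from the diagonal action $g(x,y) = (gx, gy)$. Consequently the sets $V_i := U_i \cap (X^H \times X^H)$ form an open cover of $X^H \times X^H$, each being open in the subspace topology. The main point to verify is that each $s_i$ restricts to a genuine motion planner on $V_i$ for the path fibration of $X^H$.

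\medskip

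The crux, which I expect to be the main obstacle, is to show that $s_i$ maps $V_i$ into $P(X^H)$, i.e. that an equivariant section sends an $H$-fixed pair of endpoints to an $H$-fixed path. Here I would use $H$-equivariance of $s_i$: for $(x,y) \in V_i$ and any $h \in H$ we have $h(x,y) = (hx, hy) = (x,y)$ since $x, y \in X^H$, so $s_i(x,y) = s_i\big(h(x,y)\big) = h \cdot s_i(x,y)$. Thus the path $\gamma = s_i(x,y)$ satisfies $h\gamma = \gamma$ for all $h \in H$, which means $\gamma(t) \in X^H$ for every $t$; in other words $\gamma \in P(X^H)$. Hence $s_i|_{V_i} \colon V_i \to P(X^H)$ is well-defined, and it is a section of the path fibration of $X^H$ because the endpoint map is computed pointwise. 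It follows that $X^H \times X^H$ is covered by $k$ open sets admitting motion planners, so $TC(X^H) \leq k = TC_G(X)$.

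\medskip

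Finally, taking $H$ to be the trivial subgroup gives $X^H = X$ and hence $TC(X) \leq TC_G(X)$, establishing the ``in particular'' statement. The only subtlety worth a remark is the identification $P(X^H) = (PX)^H$ and the continuity of the restricted sections, both of which follow from the compact-open topology being compatible with restriction to the closed invariant subspace $X^H$.
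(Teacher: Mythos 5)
Your proof is correct. The paper does not prove this proposition itself --- it quotes it from Colman and Grant \cite[Corollary 5.4]{Colman2012} --- and your argument (restricting the $G$-equivariant motion planners to $V_i = U_i \cap (X^H\times X^H)$ and using $H$-equivariance to see that the resulting paths lie in $X^H$, i.e. that $(PX)^H = P(X^H)$) is exactly the standard argument underlying that reference, so there is nothing to object to.
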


\noindent This shows that in order for $TC_G(X)$ to be finite, we at least need to assume that $X$ is a \textit{$G$\nobreakdash-\hspace{0pt}connected} space, i.e., its $H$\nobreakdash-\hspace{0pt}fixed point sets are path-connected for all closed subgroups $H\subseteq G$.

Equivariant topological complexity is invariant under equivariant homotopy equivalences. It also relates to Lusternik--Schnirelmann $G$\nobreakdash-\hspace{0pt}category of $X$, $\textnormal{cat}_G(X)$, in the same way that topological complexity relates to LS category, at least when $X$ is a $G$\nobreakdash-\hspace{0pt}connected space with a non-empty fixed point set $X^G$. (Recall that $\textnormal{cat}_G (X)$ is defined as follows. A $G$\nobreakdash-\hspace{0pt}invariant subset $U \subseteq X$ is said to be \textit{$G$\nobreakdash-\hspace{0pt}categorical} if the inclusion $U \hookrightarrow X$ is $G$\nobreakdash-\hspace{0pt}homotopic to a map with values in a single orbit; then $\textnormal{cat}_G(X)$ is the least integer $k\geq 1$ such that there exists an open cover of $X$ by $k$ sets that are $G$\nobreakdash-\hspace{0pt}categorical.  In particular, $\textnormal{cat}_G(X) =1$ if and only if $X$ is $G$\nobreakdash-\hspace{0pt}contractible.) More precisely, we have:

\begin{proposition}[{\cite[Corollary 5.8]{Colman2012}}]\label{CG_TCG_Gcat}
Let $X$ be a completely normal $G$\nobreakdash-\hspace{0pt}space. If $X$ is $G$\nobreakdash-\hspace{0pt}connected and $X^G\neq \emptyset$, then
\[ \textnormal{cat}_G(X) \leq TC_G(X) \leq 2\,\textnormal{cat}_G(X)-1. \]
\end{proposition}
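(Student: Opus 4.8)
The plan is to reproduce, in the equivariant world, the template behind the classical estimate $\textnormal{cat}(X)\le TC(X)\le 2\,\textnormal{cat}(X)-1$, exploiting the fixed point $x_0\in X^G$ as a universal ``hub'' through which to route all paths, and using $G$-connectedness to build equivariant endpoint-joining families. For the lower bound, fix $x_0\in X^G$. Since $x_0$ is $G$-fixed, the slice $X\times\{x_0\}$ is a $G$-invariant subspace, $G$-homeomorphic to $X$ via $x\mapsto(x,x_0)$. Given any open $G$-invariant $U\subseteq X\times X$ carrying a $G$-equivariant motion planner $s\colon U\to PX$, I would restrict $s$ to $U_0=U\cap(X\times\{x_0\})$ and read off the $G$-homotopy $((x,x_0),t)\mapsto s(x,x_0)(t)$ from the inclusion $U_0\hookrightarrow X$ to the constant map at $x_0$; as $\{x_0\}$ is a single $G$-fixed orbit, this exhibits $U_0$ as $G$-categorical. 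Applying this to each member of an optimal $G$-equivariant motion-planning cover of $X\times X$ yields a $G$-categorical cover of $X$ of the same cardinality, so $\textnormal{cat}_G(X)\le TC_G(X)$.

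For the upper bound, set $m=\textnormal{cat}_G(X)$ and fix a $G$-categorical open cover $A_1,\dots,A_m$, with $G$-homotopies $F_i\colon A_i\times[0,1]\to X$ from the inclusion to a map $f_i$ whose image lies in a single orbit $O_i$. The first key step is to join each orbit to the hub $x_0$ equivariantly: for a basepoint $u_i\in O_i$ with isotropy $H_i$, both $u_i$ and $x_0$ lie in $X^{H_i}$, which is path-connected by $G$-connectedness, so I can choose a path $\beta_i$ from $u_i$ to $x_0$ lying pointwise in $X^{H_i}$. The assignment $gu_i\mapsto g\beta_i$ is then well-defined (because $H_i$ fixes $\beta_i$ pointwise) and, since $G$ is compact, descends to a continuous $G$-map $c_i\colon O_i\to PX$ joining each point of $O_i$ to $x_0$. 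Routing through $x_0$, I define on each $A_i\times A_j$ the motion planner sending $(x,y)$ to the concatenation $x\rightsquigarrow f_i(x)\rightsquigarrow x_0\rightsquigarrow f_j(y)\rightsquigarrow y$, assembled from $F_i$, $c_i$, the reverse of $c_j$, and the reverse of $F_j$; equivariance of every piece and $G$-fixedness of $x_0$ make it a $G$-equivariant motion planner. Thus the $m^2$ sets $A_i\times A_j$ form a $G$-invariant open cover admitting $G$-equivariant motion planners.

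It remains to compress $m^2$ sets to $2m-1$. I would colour the pair $(i,j)$ by $i+j-1\in\{1,\dots,2m-1\}$ and invoke the standard Farber separation trick: using a $G$-invariant partition of unity subordinate to $\{A_i\times A_j\}$ (obtained by averaging, over the Haar measure of the compact group $G$, an ordinary partition of unity subordinate to the finite cover, which exists by normality), one replaces the pieces of a fixed colour by the pairwise-disjoint open sets on which their weight strictly dominates the others of that colour. A disjoint union of $G$-invariant open sets each carrying a $G$-equivariant motion planner again carries one, so this produces $2m-1$ valid domains and the bound $TC_G(X)\le 2\,\textnormal{cat}_G(X)-1$.

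I expect the main obstacle to be the equivariant endpoint-joining construction, namely guaranteeing that the paths from orbit points to $x_0$ can be chosen continuously and compatibly with the $G$-action. This is precisely where both hypotheses are indispensable: $X^G\neq\emptyset$ supplies the hub that lets one bypass the double-coset complications of connecting two arbitrary orbits, while $G$-connectedness supplies the paths inside the relevant fixed-point sets $X^{H_i}$ that make $c_i$ well-defined. By contrast, the combinatorial compression in the final step is routine once a $G$-invariant partition of unity is in hand.
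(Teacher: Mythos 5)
Your argument is correct. Note first that the paper itself offers no proof of this statement: it is quoted from Colman--Grant, so there is no internal argument to compare against. That said, your lower bound --- restricting an equivariant motion planner to the slice $X\times\{x_0\}$ over a fixed point and reading off a $G$-compression of $V_i=\{x:(x,x_0)\in U_i\}$ into $\{x_0\}$ --- is precisely the argument the authors do write out for the $TC^G$ analogue in Proposition~\ref{catG_and_TCG}, and it even yields the stronger bound $_{\{x_0\}}\textnormal{cat}_G(X)\leq TC_G(X)$ without using $G$-connectedness. Your upper bound is a self-contained equivariant transcription of the classical proof of $TC(X)\leq 2\,\textnormal{cat}(X)-1$, whereas Colman--Grant obtain it by first proving $TC_G(X)\leq\textnormal{cat}_{G\times G}(X\times X)$ and then invoking a product inequality for equivariant category; your route makes visible exactly where $X^G\neq\emptyset$ (the hub) and $G$-connectedness (the paths $\beta_i$ inside $X^{H_i}$) enter, which is a genuine expository gain. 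Two points deserve a word if you write this up: continuity of $c_i\colon O_i\to PX$ rests on the homeomorphism $G/H_i\cong Gu_i$, which uses compactness of $G$ and Hausdorffness of $X$; and in the separation step, comparing weights only within a colour class does produce a cover, but the verification is the one easy-to-fumble point --- given $(x,y)$, let $I\times J$ be the set of pairs maximizing $\tilde\phi_i(x)\tilde\phi_j(y)$ and observe that $(\min I,\min J)$ has no competitor from $I\times J$ on its own anti-diagonal, hence strictly dominates its colour class. Finally, you only use normality of $X$ (for the subordinate partition of unity), not complete normality; that is a harmless weakening of the stated hypothesis for your route.
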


\subsection{Invariant topological complexity $TC^G$}\label{sect:ITC}

For a $G$\nobreakdash-\hspace{0pt}space $X$ define
\[ PX \times_{X/G} PX = \big\{(\gamma,\delta) \in PX \times PX \,\big|\, G\gamma(1)=G\delta(0) \big\}. \]
Clearly, both $PX \times_{X/G} PX$ and $X\times X$ are $(G\times G)$\nobreakdash-\hspace{0pt}spaces with respect to the actions:
\[ (g_1,g_2)(\gamma,\delta)=(g_1\gamma, g_2\delta) \textnormal{ for $g_1$, $g_2 \in G$ and $(\gamma,\delta) \in PX\times_{X/G} PX$,} \]
where the $G$\nobreakdash-\hspace{0pt}action on each copy of $PX$ is the one given in the previous section, and
\[ (g_1, g_2)(x,y) = (g_1x,g_2y) \textnormal{ for $g_1$, $g_2 \in G$ and $x$, $y \in X$.} \]
Then the map $\pi^G \colon PX \times_{X/G} PX \to X\times X$ given by
\[ \pi^G(\gamma,\delta) = \big(\gamma(0),\delta(1)\big) \textnormal{ for $(\gamma, \delta) \in PX\times_{X/G} PX$}\]
turns out to be a $(G\times G)$\nobreakdash-\hspace{0pt}fibration.

\textit{Invariant topological complexity} of $X$, written $TC^G(X)$, is the least integer $k\geq 1$ such that there exists an open $(G\times G)$\nobreakdash-\hspace{0pt}invariant cover $U_1$, \ldots, $U_k$ of $X \times X$ and $(G\times G)$\nobreakdash-\hspace{0pt}equivariant maps $s_i \colon U_i \to PX\times_{X/G} PX$ with $\pi^G \circ s_i = i_{U_i}$, $1 \leq i \leq k$. Note that the $s_i$'s are not motion planners in the sense of (non-equivariant) topological complexity.

Another, equivalent, approach to $TC^G$ is as follows. Let $A \subseteq X$ be a closed $G$\nobreakdash-\hspace{0pt}invariant subset. Recall that an open $G$\nobreakdash-\hspace{0pt}invariant subspace $U \subseteq X$ is said to be \textit{$G$\nobreakdash-\hspace{0pt}compressible} into $A$ if the inclusion $U \hookrightarrow X$ is $G$\nobreakdash-\hspace{0pt}homotopic to a $G$\nobreakdash-\hspace{0pt}equivariant map $c \colon U \to X$ such that $c(U)\subseteq A$. Then $TC^G$ can be expressed as the least integer $k\geq 1$ such that there exists an open cover of $X\times X$ by $k$ sets that are $(G\times G)$\nobreakdash-\hspace{0pt}compressible into $\daleth(X) = (G\times G)\Delta(X)$, where $\Delta(X)$ stands for the diagonal of $X\times X$. (Descriptions in these terms also exist for $TC$ and $TC_G$, but we will not make use of them.)

Just as equivariant topological complexity, invariant topological complexity is invariant under equivariant homotopy equivalences.  Furthermore:

\begin{proposition}[{\cite[Remark 3.9, Corollary 3.26]{Lubawski2014}}]\label{orbit_bound}
Let $X$ be a $G$\nobreakdash-\hspace{0pt}space with $X^G\neq\emptyset$. Then
\[ \max\!\big\{TC(X^G), TC(X/G)\big\} \leq TC^G(X). \]
\end{proposition}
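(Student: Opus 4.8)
The plan is to start from an optimal $(G\times G)$-equivariant cover realizing $TC^G(X)$ and to manufacture from it ordinary motion planners first on $X^G$ and then on $X/G$, thereby establishing both inequalities at once. So suppose $k=TC^G(X)$ and fix an open $(G\times G)$-invariant cover $U_1,\dots,U_k$ of $X\times X$ together with $(G\times G)$-equivariant sections $s_i\colon U_i\to PX\times_{X/G}PX$ of $\pi^G$.

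For the bound $TC(X^G)\le TC^G(X)$, I would restrict everything to the fixed point set. The sets $V_i=U_i\cap(X^G\times X^G)$ form an open cover of $X^G\times X^G$, and the point is that over $V_i$ the section $s_i$ is forced to take values in genuine paths inside $X^G$. Indeed, if $(x,y)\in V_i$ and $s_i(x,y)=(\gamma,\delta)$, then because $x\in X^G$ and $y\in X^G$ the pair $(x,y)$ is fixed by all of $G\times G$; equivariance of $s_i$ gives $(\gamma,\delta)=(g_1\gamma,g_2\delta)$ for all $g_1,g_2\in G$, so $\gamma$ and $\delta$ are paths with image in $X^G$. On $X^G$ every orbit is a single point, so the fiber condition $G\gamma(1)=G\delta(0)$ collapses to $\gamma(1)=\delta(0)$, while $\pi^G(\gamma,\delta)=(x,y)$ gives $\gamma(0)=x$ and $\delta(1)=y$. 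Hence the concatenation $\gamma*\delta$ is an honest path in $X^G$ from $x$ to $y$, and $(x,y)\mapsto\gamma*\delta$ is a motion planner on $V_i$. This exhibits a cover of $X^G\times X^G$ by $k$ domains of continuity.

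For the bound $TC(X/G)\le TC^G(X)$, I would instead push the data down to the orbit space. Writing $q\colon X\to X/G$ for the quotient, each $U_i$ is $(G\times G)$-invariant and so descends to an open set $\overline{U}_i\subseteq X/G\times X/G$, and these cover $X/G\times X/G$. I would then form the composite sending $(x,y)\in U_i$, with $s_i(x,y)=(\gamma,\delta)$, to the concatenation $(q\circ\gamma)*(q\circ\delta)$; this is legitimate since $G\gamma(1)=G\delta(0)$ is exactly the condition that $q\circ\gamma$ and $q\circ\delta$ match at the splicing point. The resulting map $U_i\to P(X/G)$ is $(G\times G)$-invariant, because applying $g_1$ to $\gamma$ or $g_2$ to $\delta$ leaves $q\circ\gamma$ and $q\circ\delta$ unchanged, so it factors through a continuous map $\overline{U}_i\to P(X/G)$. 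Since $q\circ\gamma$ starts at $q(x)$ and $q\circ\delta$ ends at $q(y)$, this factored map is a motion planner on $\overline{U}_i$, giving a cover of $X/G\times X/G$ by $k$ domains of continuity. Taking the maximum of the two bounds finishes the proof.

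The routine parts above are the endpoint bookkeeping and the standard fact that concatenation of paths with matching endpoints is continuous. The step I expect to require the most care is the descent in the orbit-space argument: I must check that the $(G\times G)$-invariant map into $P(X/G)$ genuinely factors continuously through the quotient $\overline{U}_i$, which rests on $U_i\to\overline{U}_i$ being a quotient map — true because $X\times X\to X/G\times X/G$ is an open quotient map and $U_i$ is saturated. The hypothesis $X^G\ne\emptyset$ guarantees that the target $TC(X^G)$ is meaningful.
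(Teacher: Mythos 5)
Your argument is correct, and it is worth noting that the paper itself offers no proof of Proposition~\ref{orbit_bound} at all: the statement is simply imported from \cite[Remark 3.9, Corollary 3.26]{Lubawski2014}. So what you have written is a self-contained verification rather than an alternative to anything in the text. Your treatment of the bound $TC(X^G)\leq TC^G(X)$ is essentially the same device the paper uses in its proof of Proposition~\ref{catG_and_TCG}: evaluate the $(G\times G)$-equivariant sections at pairs fixed by the action, conclude from equivariance that both coordinate paths land in $X^G$, and observe that the fiber condition $G\gamma(1)=G\delta(0)$ collapses to an honest endpoint match so that concatenation yields a genuine motion planner on $X^G\times X^G$. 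The orbit-space half is a clean descent argument, and you have correctly isolated the only point needing care: the $(G\times G)$-invariant map $U_i\to P(X/G)$ factors continuously through $\overline{U}_i$ because $q\times q$ is an open quotient map and $U_i$ is saturated, so $U_i\to\overline{U}_i$ is itself a quotient map. One cosmetic remark: as you observe, the hypothesis $X^G\neq\emptyset$ is used only to make $TC(X^G)$ meaningful (and, in the first half, to guarantee there is something to restrict to); the orbit-space inequality holds without it, which is consistent with how the two halves are stated separately in \cite{Lubawski2014}.
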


As far as relationship between invariant topological complexity and Lusternik--Schnirelmann $G$\nobreakdash-\hspace{0pt}category is concerned, Lubawski and Marzantowicz proved:

\begin{proposition}[{\cite[Remark 3.24]{Lubawski2014}}]\label{LM_catG_and_TCG}
If $X$ is $G$\nobreakdash-\hspace{0pt}connected and $X^G\neq \emptyset$, then
\[ TC^G(X)\leq 2\, \textnormal{cat}_G(X)-1. \]
\end{proposition}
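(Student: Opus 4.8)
The plan is to run the invariant analogue of the classical chain $TC(X)\le 2\,\textnormal{cat}(X)-1$ (and of the upper bound in Proposition~\ref{CG_TCG_Gcat}), the two ingredients being a local construction of sections of $\pi^G$ over products of $G$\nobreakdash-categorical sets, followed by the standard combinatorial passage from $m^2$ to $2m-1$ sets. Write $m=\textnormal{cat}_G(X)$ and fix a $G$\nobreakdash-categorical open cover $U_1,\dots,U_m$ of $X$; thus for each $i$ the inclusion $U_i\hookrightarrow X$ is $G$\nobreakdash-homotopic, via a $G$\nobreakdash-homotopy $H_i$, to a $G$\nobreakdash-map $c_i\colon U_i\to Gx_i$ with values in a single orbit. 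Fix once and for all a point $x_0\in X^G$.

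The heart of the argument is to show that each product $U_i\times U_j$ admits a $(G\times G)$\nobreakdash-equivariant section of $\pi^G$, equivalently that it is $(G\times G)$\nobreakdash-compressible into $\daleth(X)$. The idea is to route both coordinates through the fixed point $x_0$: over $(a,b)\in U_i\times U_j$ I would take $\gamma$ to be a path from $a$ to $c_i(a)\in Gx_i$ (following $H_i$) concatenated with a path from $c_i(a)$ to $x_0$, and $\delta$ to be a path from $x_0$ to $c_j(b)\in Gx_j$ concatenated with the reverse of $H_j$, ending at $b$. Then $\gamma(1)=x_0=\delta(0)$, and since $x_0$ is fixed the meeting condition $G\gamma(1)=G\delta(0)$ holds automatically, so $(\gamma,\delta)\in PX\times_{X/G}PX$ with $\pi^G(\gamma,\delta)=(a,b)$.

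The single point where the hypotheses are genuinely used is the construction of the connecting paths from $c_i(a)$ to $x_0$ as a $(G\times G)$\nobreakdash-equivariant family. For this I would choose a path $\alpha_i$ from $x_i$ to $x_0$ inside the fixed-point set $X^{G_{x_i}}$ of the stabilizer $G_{x_i}$: this set is path-connected because $X$ is $G$\nobreakdash-connected, and it contains both $x_i$ (fixed by its own stabilizer) and $x_0\in X^G\subseteq X^{G_{x_i}}$. Being $G_{x_i}$\nobreakdash-invariant, $\alpha_i$ propagates to a well-defined $G$\nobreakdash-equivariant family $gx_i\mapsto g\alpha_i$ of paths from the orbit $Gx_i$ to $x_0$: if $gx_i=g'x_i$ then $g^{-1}g'\in G_{x_i}$ fixes $\alpha_i$, and $gx_0=x_0$. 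Precomposing with $c_i$ and concatenating with $H_i$ yields a continuous $(G\times G)$\nobreakdash-equivariant section on $U_i\times U_j$, the coordinate $\delta$ being handled symmetrically by the family built from $\alpha_j$.

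Finally I would perform the standard combinatorial reduction: the $2m-1$ sets $W_k=\bigcup_{i+j=k}U_i\times U_j$ for $2\le k\le 2m$ form a $(G\times G)$\nobreakdash-invariant open cover of $X\times X$, and each $W_k$ should carry a single section of $\pi^G$ assembled from the sections on its constituent products. I expect this gluing to be the main obstacle: over an overlap $(U_i\cap U_{i'})\times(U_j\cap U_{j'})$ of two products sharing the same index sum the two local sections disagree, so combining them into one continuous $(G\times G)$\nobreakdash-equivariant section requires genuine care -- either by reformulating $TC^G$ as an equivariant (Schwarz-genus) sectional category, where subadditivity under fiberwise joins is formal, or by first shrinking the cover and interpolating, exactly as in the classical proof that $\textnormal{cat}(X\times X)\le 2\,\textnormal{cat}(X)-1$. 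This would give $TC^G(X)\le 2m-1=2\,\textnormal{cat}_G(X)-1$.
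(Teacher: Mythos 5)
The paper does not actually prove this proposition --- it is quoted from \cite[Remark 3.24]{Lubawski2014} --- so there is no in-text argument to compare against; I will assess your proof on its own. The core of it is correct: your $(G\times G)$-equivariant section of $\pi^G$ over each product $U_i\times U_j$ is exactly the right equivariant analogue of the classical argument. Routing both paths through $x_0\in X^G$ makes the meeting condition $G\gamma(1)=G\delta(0)$ automatic, and the propagation $gx_i\mapsto g\alpha_i$ of a path $\alpha_i$ chosen inside the path-connected set $X^{G_{x_i}}$ is precisely where both hypotheses ($G$-connectedness and $X^G\neq\emptyset$) enter; the well-definedness and equivariance checks you give are the right ones. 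This matches in substance the route of Lubawski--Marzantowicz, namely $TC^G(X)\le{}_{\daleth(X)}\textnormal{cat}_{G\times G}(X\times X)\le 2\,\textnormal{cat}_G(X)-1$.

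The one step you flag but do not carry out is a genuine gap as written: for fixed $k$ the products $U_i\times U_{k-i}$ are not pairwise disjoint, so $W_k=\bigcup_{i+j=k}U_i\times U_j$ does not inherit a section from its constituents. The standard repair is to disjointify first. Take a $G$-invariant partition of unity $\{f_i\}$ subordinate to $\{U_i\}$ (this requires $X$ paracompact or normal --- an implicit hypothesis here, just as complete normality is assumed in Proposition~\ref{CG_TCG_Gcat}; for compact $G$ one averages an ordinary partition of unity). For $\emptyset\neq S\subseteq\{1,\dots,m\}$ set
\[ W_S=\big\{x\in X \;:\; f_i(x)>f_j(x) \text{ for all } i\in S,\ j\notin S\big\}, \]
so that $W_S$ is open and $G$-invariant, $W_S\subseteq U_i$ for every $i\in S$, $W_S\cap W_{S'}=\emptyset$ whenever $|S|=|S'|$ and $S\neq S'$, and the $W_S$ cover $X$. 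Replacing your $W_k$ by $\bigcup_{|S|+|T|=k}W_S\times W_T$ makes each piece of the union a subset of some $U_i\times U_j$, with pieces of a fixed index sum pairwise disjoint; and a disjoint union of open sets each $(G\times G)$-compressible into $\daleth(X)$ is again compressible into $\daleth(X)$, since the compressing homotopies can be chosen independently on the disjoint open pieces. With that inserted, your argument is complete.
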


\noindent It turns out, however, that $TC^G(X)$ is also bounded from below by $\textnormal{cat}_G(X)$ under the above assumptions. In fact, we have:

\begin{proposition}\label{catG_and_TCG}
If $X$ is a $G$\nobreakdash-\hspace{0pt}space and $\star \in X^G$, then $_{\{\star\}}\textnormal{cat}_G(X) \leq TC^G(X)$. In particular, $\textnormal{cat}_G(X) \leq TC^G(X)$.
\end{proposition}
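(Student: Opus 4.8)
The plan is to prove the statement $_{\{\star\}}\textnormal{cat}_G(X) \leq TC^G(X)$ by producing, from any open cover of $X \times X$ witnessing $TC^G(X) = k$, an open cover of $X$ of the same cardinality whose members are $G$-categorical relative to the orbit $G\star = \{\star\}$ (since $\star \in X^G$). Recall that $_{\{\star\}}\textnormal{cat}_G(X)$ should be understood as the relative $G$-category: the least number of $G$-invariant open sets covering $X$, each of which is $G$-compressible into the single point $\star$ (equivalently, each inclusion into $X$ is $G$-homotopic to the constant map at $\star$). The natural device is to restrict the invariant motion planners along the slice $X \cong X \times \{\star\} \subseteq X \times X$.

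First I would fix a cover $U_1, \dots, U_k$ of $X \times X$ by $(G \times G)$-invariant open sets, each $(G\times G)$-compressible into $\daleth(X) = (G\times G)\Delta(X)$, as furnished by the second description of $TC^G$ in Section~\ref{sect:ITC}. For each $i$ I would set $V_i = \{x \in X \mid (x,\star) \in U_i\}$, i.e. the preimage of $U_i$ under the map $X \to X \times X$, $x \mapsto (x,\star)$. Because $\star \in X^G$, this slice map is $G$-equivariant when $X$ carries its given action and $X \times X$ carries the diagonal $G \hookrightarrow G \times G$ action; hence each $V_i$ is $G$-invariant and open, and the $V_i$ cover $X$. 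The crux is then to check that each $V_i$ is $G$-categorical relative to $\star$: I would take the given $(G\times G)$-homotopy compressing $U_i$ into $\daleth(X)$, restrict it to $V_i \times \{\star\}$, and observe that the resulting $G$-homotopy pushes $V_i$ into $\daleth(X) \cap (X \times \{\star\})$. Since a point $(x,\star)$ lies in $(G\times G)\Delta(X)$ exactly when $x$ and $\star$ lie in the same $G$-orbit, and $\star$ is a fixed point so its orbit is $\{\star\}$, this intersection is precisely $\{(\star,\star)\}$; thus $V_i$ compresses $G$-equivariantly into $\star$, as required.

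The main obstacle I anticipate is the bookkeeping of the two commuting $G$-actions: one must be careful that restricting the $(G \times G)$-equivariant compression along the diagonally-embedded slice $x \mapsto (x,\star)$ yields an honestly $G$-equivariant homotopy of $V_i$ into $\{\star\}$, rather than merely a non-equivariant one. The point is that the subgroup of $G \times G$ fixing the second coordinate at $\star$ acts on the first coordinate, and this is exactly the diagonal copy of $G$ we want; verifying that the compressing homotopy stays inside $X \times \{\star\}$ throughout (so that the second path-coordinate $\delta$ in the pair is constant at $\star$) is where the identity $\pi^G(\gamma,\delta) = (\gamma(0),\delta(1))$ and the fibrewise condition $G\gamma(1) = G\delta(0)$ must be invoked carefully.

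Finally I would deduce the ``in particular'' clause. Taking $H = \{\star\}$ trivially gives $_{\{\star\}}\textnormal{cat}_G(X) \geq \textnormal{cat}_G(X)$ is false in general, so instead I would argue directly: any relative $G$-categorical cover is in particular a $G$-categorical cover (compression into a point is a special case of compression into an orbit), yielding $\textnormal{cat}_G(X) \leq {}_{\{\star\}}\textnormal{cat}_G(X) \leq TC^G(X)$. For the non-equivariant bound $\textnormal{cat}(X) \leq TC^G(X)$, I would either forget the $G$-structure in the above cover to obtain an ordinary categorical cover of $X$ of size $k$, or simply quote $\textnormal{cat}(X) = \textnormal{cat}_{\{e\}}(X) \leq \textnormal{cat}_G(X)$ together with the bound just established; the former is cleaner since it avoids comparing categories for different groups.
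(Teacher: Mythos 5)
Your construction of the sets $V_i=\{x\in X \mid (x,\star)\in U_i\}$ and the verification that they form a $G$-invariant open cover of $X$ agree with the paper's proof. The gap is in the compression step. Restricting a $(G\times G)$-equivariant compressing homotopy $F$ to the slice $V_i\times\{\star\}$ does \emph{not} keep the second coordinate at $\star$: equivariance under $\{1\}\times G$ only forces $F\big((x,\star),t\big)=(a_t,b_t)$ to satisfy $gb_t=b_t$ for all $g\in G$, i.e.\ the second coordinate stays in $X^G$, not at $\star$. Consequently the terminal map lands in $\daleth(X)\cap(X\times X^G)=\Delta(X^G)$ rather than in $\{(\star,\star)\}$: at $t=1$ you get $(b_1,b_1)$ with $b_1=b_1(x)\in X^G$ a point that varies with $x$ and need not equal $\star$. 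The first-coordinate homotopy you extract therefore compresses $V_i$ only into $X^G$, which is neither $\{\star\}$ nor a single orbit, so not even the weaker inequality $\textnormal{cat}_G(X)\le TC^G(X)$ follows. The statement you flag as needing careful verification (that the homotopy stays inside $X\times\{\star\}$, equivalently that the second path-coordinate $\delta$ is constant at $\star$) is simply false in general.

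The missing idea, and the way the paper argues, is to use \emph{both} halves of the section $s_i(x,\star)=(\gamma,\delta)$. Equivariance under $\{1\}\times G$ shows that $\delta$ is a (generally non-constant) path in $X^G$; since $\delta(0)\in X^G$, the fibrewise condition $G\gamma(1)=G\delta(0)$ forces $\gamma(1)=\delta(0)$, and $\pi^G\circ s_i=\textnormal{id}$ gives $\delta(1)=\star$. The concatenation $\gamma*\delta$ is then an honest path from $x$ to $\star$, and $H_i(x,t)=(\gamma*\delta)(t)$ is the required $G$-compression of $V_i$ into $\{\star\}$. In your picture this amounts to following the first-coordinate homotopy and then running the second-coordinate homotopy backwards inside $X^G$; omitting this second leg is what breaks your argument. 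Your treatment of the ``in particular'' clause is fine once the main inequality is in place.
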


\noindent Here $_{\{\star\}}\textnormal{cat}_G(X)$ stands for the least integer $k\geq 1$ such that there exists an open cover of $X$ by $k$ sets that are $G$\nobreakdash-\hspace{0pt}compressible into $\{\star\}$.

\begin{proof}
Let $U_1$, \ldots, $U_k$ form a $(G \times G)$\nobreakdash-\hspace{0pt}invariant open cover of $X \times X$. Write $s_i \colon U_i \to PX \times_{X/G} PX$ for a $(G\times G)$\nobreakdash-\hspace{0pt}motion planner on $U_i$, $1 \leq i \leq k$. Choose a fixed point $\star \in X^G$ and define
\[ V_i = \big\{x \in X \;|\; (x, \star) \in U_i \big\} , \textnormal{ $1 \leq i \leq k$.}\]
It is easy to see that $V_i$'s form a $G$\nobreakdash-\hspace{0pt}invariant open cover of $X$. We claim that those of $V_i$'s which are non-empty are also $G$\nobreakdash-\hspace{0pt}compressible into $\{\star\}$. Note that $s_i(x, \star)$ is an actual path in $X$ for any $x \in V_i$. Indeed, if $x \in V_i$ and  $s_i(x,\star)=(\gamma, \delta)$, then
\[ (\gamma, g\delta) = (1,g)(\gamma,\delta) = (1,g)s_i(x, \star) = s_i(x,g\star) = s_i(x,\star) = (\gamma,\delta),\]
so that $g\delta = \delta$ for any $g \in G$, which means that $\delta$ is a path in $X^G$. In particular, since $G\gamma(1)=G\delta(0)$, we have $\gamma(1)=\delta(0)$.

It is now clear that $H_i \colon V_i \times [0,1] \to X$ given by
\[ H_i(x,t) = s_i(x,\star)(t) \textnormal{ for $x \in V_i$ and $t \in [0,1]$} \]
$G$\nobreakdash-\hspace{0pt}compresses $V_i$ into $\{\star\}$ for each $1 \leq i \leq k$.
\end{proof}

\begin{cor}\label{Gcontractibility}
Let $X$ be a $G$\nobreakdash-\hspace{0pt}space with $X^G \neq \emptyset$. Then $X$ is $G$\nobreakdash-\hspace{0pt}contractible if and only if $TC^G(X)=1$.
\end{cor}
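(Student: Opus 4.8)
The plan is to prove the two implications separately, deducing one from Proposition~\ref{catG_and_TCG} and establishing the other by an explicit global section. Since $X^G \neq \emptyset$, I fix once and for all a point $\star \in X^G$.

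For the implication $TC^G(X) = 1 \Rightarrow X$ is $G$\nobreakdash-contractible, I would simply quote Proposition~\ref{catG_and_TCG}. It gives $_{\{\star\}}\textnormal{cat}_G(X) \leq TC^G(X) = 1$, hence $_{\{\star\}}\textnormal{cat}_G(X) = 1$. By definition of $_{\{\star\}}\textnormal{cat}_G$, this means that $X$ itself is $G$\nobreakdash-compressible into $\{\star\}$, i.e. the inclusion $\textnormal{id}_X \colon X \hookrightarrow X$ is $G$\nobreakdash-homotopic to the constant map at~$\star$. As $\star$ is a fixed point, this constant map is $G$\nobreakdash-equivariant, so the compressing $G$\nobreakdash-homotopy is exactly a $G$\nobreakdash-contraction of~$X$.

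For the reverse implication $X$ is $G$\nobreakdash-contractible $\Rightarrow TC^G(X) = 1$, I would build a single section defined on all of $X \times X$. Let $H \colon X \times [0,1] \to X$ be a $G$\nobreakdash-contraction with $H(-,0) = \textnormal{id}_X$ and $H(-,1) \equiv \star$. Define $s \colon X \times X \to PX \times_{X/G} PX$ by $s(x,y) = (\gamma, \delta)$, where $\gamma(t) = H(x,t)$ and $\delta(t) = H(y,1-t)$. Then $\gamma(0) = x$ and $\delta(1) = y$, while $\gamma(1) = \star = \delta(0)$; in particular $G\gamma(1) = G\delta(0)$, so $s$ genuinely lands in $PX \times_{X/G} PX$ and satisfies $\pi^G \circ s = \textnormal{id}_{X \times X}$. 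Since the single set $X \times X$ admits the equivariant section $s$, this yields $TC^G(X) = 1$.

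The only point requiring genuine verification --- and the place where the hypotheses enter nontrivially --- is the $(G \times G)$\nobreakdash-equivariance of $s$. This reduces to the $G$\nobreakdash-equivariance of $H$ in the space variable: for $g_1, g_2 \in G$ one has $H(g_1 x, t) = g_1 H(x,t)$ and $H(g_2 y, 1-t) = g_2 H(y, 1-t)$, which is precisely the identity $s(g_1 x, g_2 y) = (g_1, g_2)\, s(x,y)$. I expect no further obstacle; the corollary is essentially a direct packaging of Proposition~\ref{catG_and_TCG} together with this one routine equivariance check.
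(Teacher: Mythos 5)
Your proof is correct, and one half of it takes a genuinely different route from the paper's. For the implication $TC^G(X)=1 \Rightarrow X$ is $G$\nobreakdash-contractible you argue exactly as the paper does, via Proposition~\ref{catG_and_TCG}. For the converse, however, the paper does not construct a section at all: it invokes the equivalent description of $TC^G$ in terms of $(G\times G)$\nobreakdash-compressibility into $\daleth(X)$ and observes that the diagonal homotopy $\tilde{H}\big((x,y),t\big)=\big(H(x,t),H(y,t)\big)$ compresses $X\times X$ into $Gx_0\times Gx_0\subseteq\daleth(X)$. You instead work from the primary definition and exhibit an explicit global $(G\times G)$\nobreakdash-equivariant section $s(x,y)=(\gamma,\delta)$ of $\pi^G$ with $\gamma(t)=H(x,t)$ and $\delta(t)=H(y,1-t)$; your endpoint and equivariance checks are all fine. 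One small point you gloss over: $G$\nobreakdash-contractibility as defined in the paper only provides a $G$\nobreakdash-homotopy from $\textnormal{id}_X$ to a map $c$ with values in a single orbit $Gx_0$, not necessarily the constant map at your chosen $\star$. This is harmless: since $c$ is $G$\nobreakdash-equivariant and $X^G\neq\emptyset$, evaluating $c$ at a fixed point shows that $Gx_0$ is a single fixed point, so the contraction does terminate at a fixed point, which you may then take as $\star$; alternatively, your section works verbatim with $\gamma(1)=c(x)$ and $\delta(0)=c(y)$, since these lie in the same orbit and membership in $PX\times_{X/G}PX$ only requires $G\gamma(1)=G\delta(0)$. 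The paper's compressibility argument has the mild advantage, recorded in the remark immediately following the corollary, of not using $X^G\neq\emptyset$ at all for this direction.
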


\begin{proof}
($\Rightarrow$) Suppose that $X$ is $G$\nobreakdash-\hspace{0pt}contractible, so that there exists a $G$\nobreakdash-\hspace{0pt}homotopy $H \colon X \times [0,1] \to X$ such that $H(-,0)$ is the identity map on $X$ and $H(-,1) = c$ is a map with values in a single orbit, say $Gx_0$. Define $\tilde{H}\colon (X \times X) \times [0,1] \to X \times X$ by setting
\[ \tilde{H}\big((x,y),t\big) = \big(H(x,t), H(y,t)\big) \textnormal{ for $x$, $y \in X$ and $t\in [0,1]$.}\]
Clearly, $\tilde{H}$ is a $(G\times G)$\nobreakdash-\hspace{0pt}equivariant map, $\tilde{H}(-,0)$ is the identity on $X \times X$, and
\[ \tilde{H}\big((x,y),1\big) = \big(c(x), c(y)\big) \in Gx_0 \times Gx_0 = (G\times G)(x_0,x_0) \subseteq \daleth(X). \]
This shows that $X \times X$ is $(G\times G)$\nobreakdash-\hspace{0pt}compressible into $\daleth(X)$, hence $TC^G(X)=1$.

($\Leftarrow$) This is a straightforward consequence of Proposition \ref{catG_and_TCG}: if $TC^G(X)=1$, then $\textnormal{cat}_G(X) = 1$, which exactly means that $X$ is $G$\nobreakdash-\hspace{0pt}contractible.
\end{proof}

\begin{remark}
Note that we do not need to know that the fixed point set is non-empty in order for the ``($\Rightarrow$)'' implication to work. In other words, $G$\nobreakdash-\hspace{0pt}contractibility of~$X$ \textit{always} implies $TC^G(X)=1$, even for a disconnected $X$. Equivariant topological complexity does not have an analogous property: it can be arbitrarily high for $G$\nobreakdash-\hspace{0pt}contractible spaces, see \cite[Theorem 5.11]{Colman2012}.
\end{remark}

We conclude this section with a simple example which simultaneously shows that:
\begin{enumerate}
\item[(1)] A space $X$ needs not be $G$\nobreakdash-\hspace{0pt}connected in order for $TC^G(X)$ to be finite.
\item[(2)] Invariant topological complexity does not have a property analogous to the one stated in Proposition \ref{CG_TCG_lower}.
\end{enumerate}

\begin{ex}\label{ex:disconnected_X}
Let $G=\zmod{2}\oplus\zmod{2}=\langle a, b\rangle$. Consider $X = S^1 - \{N, S\}$ equipped with the $G$\nobreakdash-\hspace{0pt}action given by:
\[
\begin{cases}
a(x,y) = (-x,y)\\
b(x,y) = (x,-y)
\end{cases}
\textnormal{ for $(x, y) \in X$.}
\]
Clearly, $X$ is $G$\nobreakdash-\hspace{0pt}contractible, hence $TC^G(X)=1$. On the other hand, since $X^{\langle b\rangle}$ is disconnected, $TC\big(X^{\langle b\rangle}\big)=\infty$.
\end{ex}

\section{Linear spheres}\label{sec:Linear-spheres}

Recall that the fixed point set of a linear action on $S^n$ is a standard unknotted sphere,  i.e., the intersection of a linear subspace of $\mathbb{R}^{n+1}$ with $S^n$. Consequently, if the fixed point set is non-empty, there are at least two fixed points.

\subsection{Arbitrary finite groups}

Throughout this section $G$ is a finite group.

\begin{proposition}\label{linear_Gcat}
Suppose that a finite group $G$ acts linearly on a sphere $S^n$ with $(S^n)^G \neq \emptyset$. Then $\textnormal{cat}_G(S^n)=2$.
\end{proposition}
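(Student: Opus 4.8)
The plan is to prove the two inequalities $\textnormal{cat}_G(S^n)\geq 2$ and $\textnormal{cat}_G(S^n)\leq 2$ separately. Throughout I regard the action as orthogonal, so that $S^n$ is the unit sphere of an orthogonal $G$\nobreakdash-\hspace{0pt}representation $V=\mathbb{R}^{n+1}$ and $(S^n)^G = S^n\cap V^G$ is a standard subsphere $S^k$ with $k\geq 0$ (non-empty by hypothesis). A useful observation to record first is that if $p\in (S^n)^G$, then its antipode $-p$ is fixed as well, since $g(-p)=-gp=-p$ for every $g\in G$.

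For the upper bound I would exhibit an explicit $G$\nobreakdash-\hspace{0pt}categorical open cover of size two. Fix $p\in (S^n)^G$ and set $U=S^n\setminus\{-p\}$ and $V=S^n\setminus\{p\}$; these are $G$\nobreakdash-\hspace{0pt}invariant open sets covering $S^n$. To see that $U$ is $G$\nobreakdash-\hspace{0pt}categorical I would compress it onto the single fixed orbit $\{p\}$ by means of the straight-line homotopy in $V$ followed by radial projection,
\[ H(x,t)=\frac{(1-t)x+tp}{\lVert (1-t)x+tp\rVert},\qquad x\in U,\ t\in[0,1]. \]
A short check shows that $(1-t)x+tp$ vanishes only for $x=-p$, so $H$ is well defined on $U$, with $H(-,0)=\textnormal{id}_U$ and $H(-,1)\equiv p$. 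Since the action is orthogonal, each $g$ preserves norms and fixes $p$, whence $gH(x,t)=H(gx,t)$; thus $H$ is a $G$\nobreakdash-\hspace{0pt}homotopy compressing $U$ into the orbit $\{p\}$. Symmetrically, $V$ compresses into $\{-p\}$. Consequently $\{U,V\}$ is a $G$\nobreakdash-\hspace{0pt}categorical cover and $\textnormal{cat}_G(S^n)\leq 2$.

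For the lower bound I would show that $S^n$ is not $G$\nobreakdash-\hspace{0pt}contractible, which by the characterization recorded earlier ($\textnormal{cat}_G(X)=1$ if and only if $X$ is $G$\nobreakdash-\hspace{0pt}contractible) yields $\textnormal{cat}_G(S^n)\geq 2$. Suppose to the contrary that a $G$\nobreakdash-\hspace{0pt}homotopy $H$ compresses $S^n$ onto some orbit $Gx_0$. Restricting $H$ to $(S^n)^G$ stays inside $(S^n)^G$ by equivariance, so it produces a homotopy of the inclusion of $(S^n)^G$ into a map with values in $Gx_0\cap (S^n)^G$. This intersection is non-empty precisely when the orbit $Gx_0$ consists of a single fixed point, so $(S^n)^G=S^k$ would be contractible, contradicting the fact that $\textnormal{cat}(S^k)=2$ for every $k\geq 0$.

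The step I expect to be the main obstacle is verifying that the contracting homotopies used for the upper bound are genuinely $G$\nobreakdash-\hspace{0pt}equivariant: this is exactly where linearity is indispensable, since it permits performing the contraction inside the ambient representation in a way that commutes with the $G$\nobreakdash-\hspace{0pt}action and simultaneously avoids the antipodal singularity. For a general (non-linear) smooth action neither the existence of antipodal fixed points nor the equivariance of such a radial contraction is available, which is why this argument is confined to the linear case.
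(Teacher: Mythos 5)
Your proof is correct and follows essentially the same route as the paper: a straight-line contraction onto a pair of antipodal fixed points (the paper uses collared hemispheres rather than complements of single points, but the homotopy and the equivariance check via orthogonality are identical), and the lower bound via non-$G$-contractibility. Your phrasing of the lower bound through the restriction to $(S^n)^G$ is just an unwinding of the paper's observation that $G$-contractibility with a non-empty fixed point set implies ordinary contractibility.
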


\begin{proof}
Assume without loss of generality that the last coordinate of $S^n$ is left fixed by the action. In particular, $(0,\ldots,0,\pm 1)$ are fixed points.

As $G$\nobreakdash-\hspace{0pt}contractibility of a $G$\nobreakdash-\hspace{0pt}space with a non-empty fixed point set implies its contractibility, $S^n$ is not $G$\nobreakdash-\hspace{0pt}contractible, hence $\textnormal{cat}_G(S^n) > 1$. Write $S^n_+$ for a ``collared'' northern hemisphere, i.e., the set of points $(x_0, \ldots, x_n) \in S^n$ with $x_n > -\frac{1}{2}$,
and define $S^n_-$ accordingly. Observe that both these sets are $G$\nobreakdash-\hspace{0pt}categorical. Indeed, since the action is linear and leaves the last coordinate fixed, the usual contraction of $S^n_{\pm}$ onto the north (south) pole is a $G$\nobreakdash-\hspace{0pt}equivariant map.
\end{proof}


In view of Proposition \ref{CG_TCG_Gcat} (for $TC_G$) and Propositions \ref{LM_catG_and_TCG}, \ref{catG_and_TCG} (for $TC^G$), Proposition \ref{linear_Gcat} implies:

\begin{thm}\label{linear_23}
If $S^n$ is a linear $G$\nobreakdash-\hspace{0pt}connected sphere with $(S^n)^G\neq\emptyset$, then
\[2 \leq TC_G(S^n),\, TC^G(S^n) \leq 3.\]
\end{thm}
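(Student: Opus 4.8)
The plan is to read off both chains of inequalities directly from the computation $\textnormal{cat}_G(S^n)=2$ supplied by Proposition \ref{linear_Gcat}, feeding this single value into the general Lusternik--Schnirelmann bounds already recorded in Section \ref{sec:Preliminaries}. The whole content of the theorem is that $\textnormal{cat}_G(S^n)$ happens to equal $2$, which is precisely the value that squeezes both invariants into the interval $\{2,3\}$ once the standard estimates are applied.

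First I would check that the hypotheses of the relevant propositions are met. A linear sphere $S^n$ is a compact metrizable space, hence completely normal, and it is $G$\nobreakdash-\hspace{0pt}connected with $(S^n)^G\neq\emptyset$ by assumption; thus Propositions \ref{CG_TCG_Gcat}, \ref{LM_catG_and_TCG} and \ref{catG_and_TCG} all apply. For equivariant topological complexity I would then invoke Proposition \ref{CG_TCG_Gcat}, which gives
\[ \textnormal{cat}_G(S^n) \leq TC_G(S^n) \leq 2\,\textnormal{cat}_G(S^n)-1; \]
substituting $\textnormal{cat}_G(S^n)=2$ immediately yields $2 \leq TC_G(S^n) \leq 3$.

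For invariant topological complexity the two bounds come from different sources, so I would handle them separately. The lower bound follows from Proposition \ref{catG_and_TCG}, which gives $\textnormal{cat}_G(S^n)\leq TC^G(S^n)$ and hence $2 \leq TC^G(S^n)$; the upper bound follows from Proposition \ref{LM_catG_and_TCG}, which gives $TC^G(S^n)\leq 2\,\textnormal{cat}_G(S^n)-1 = 3$. Combining the two establishes $2 \leq TC^G(S^n) \leq 3$, completing the proof.

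I do not expect a genuine obstacle here: the argument is a purely formal combination of the preceding propositions, and the only step requiring any attention is verifying the mild point-set hypotheses (complete normality and $G$\nobreakdash-\hspace{0pt}connectedness) so that the category estimates are legitimately available. The real work has already been done in Proposition \ref{linear_Gcat}, where the exact value $\textnormal{cat}_G(S^n)=2$ is computed.
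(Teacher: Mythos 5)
Your argument is exactly the paper's: the theorem is deduced by substituting the value $\textnormal{cat}_G(S^n)=2$ from Proposition \ref{linear_Gcat} into the bounds of Proposition \ref{CG_TCG_Gcat} for $TC_G$ and of Propositions \ref{LM_catG_and_TCG} and \ref{catG_and_TCG} for $TC^G$. Your additional care in checking the point-set hypotheses is fine but adds nothing beyond what the paper leaves implicit.
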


Further properties of (invariant) topological complexity yield:

\begin{cor}\label{TC_arbitrary}
Let $S^n$ be a linear $G$\nobreakdash-\hspace{0pt}connected sphere with $(S^n)^G=S^k$, where $0< k < n$. Suppose that either:
\begin{enumerate}
\item $k$ is even, or
\item the orbit space $S^n/G$ is neither contractible nor homotopy equivalent to an odd-dimensional sphere.
\end{enumerate}
Then $TC^G(S^n)=3$.
\end{cor}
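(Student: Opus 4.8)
The plan is to combine Theorem~\ref{linear_23}, which already pins $TC^G(S^n)$ to either $2$ or~$3$, with the lower bounds for $TC^G$ recorded in Proposition~\ref{orbit_bound}. Since $TC^G(S^n)\leq 3$ always holds under the stated hypotheses, the entire task reduces to \emph{excluding the value~$2$}, i.e. to showing $TC^G(S^n)\geq 3$ in each of the two cases. The natural tool is the inequality
\[ \max\!\big\{TC\big((S^n)^G\big), TC(S^n/G)\big\} \leq TC^G(S^n), \]
so it suffices to produce, in each case, one of the two spaces $(S^n)^G$ or $S^n/G$ whose ordinary topological complexity is at least~$3$.

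For case~(1), I would use the fixed point set. By hypothesis $(S^n)^G=S^k$ with $k$ even and $0<k<n$, so $(S^n)^G$ is an even-dimensional sphere, and Example~\ref{TC_spheres} gives $TC(S^k)=3$. Feeding this into Proposition~\ref{orbit_bound} yields $3\leq TC^G(S^n)$, which together with the upper bound of Theorem~\ref{linear_23} forces $TC^G(S^n)=3$. The only point requiring care is that Proposition~\ref{orbit_bound} needs $X^G\neq\emptyset$, which is guaranteed here since $(S^n)^G=S^k$ is non-empty for $k\geq 0$; note that $G$-connectedness of $S^n$ ensures the fixed set is a genuine (connected) sphere rather than a disjoint union, so the identification with $S^k$ is unambiguous.

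For case~(2), I would instead use the orbit space $S^n/G$. Here the relevant fact is the characterization of spaces of low topological complexity: $TC(S^n/G)=1$ exactly when $S^n/G$ is contractible, and by Theorem~\ref{TC_2} (applied to $S^n/G$, which is a finite CW-complex, being the quotient of a compact manifold by a finite group) one has $TC(S^n/G)=2$ exactly when $S^n/G$ is homotopy equivalent to an odd-dimensional sphere. The hypothesis in case~(2) rules out both possibilities, so $TC(S^n/G)\geq 3$, and Proposition~\ref{orbit_bound} again gives $3\leq TC^G(S^n)$, hence equality with the upper bound. The main subtlety here is verifying that Theorem~\ref{TC_2} is legitimately applicable: one must confirm that $S^n/G$ has the homotopy type of a CW-complex with finitely many cells in each dimension. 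For a linear action of a finite group on $S^n$ this is standard—the quotient inherits a finite CW structure—so the invocation is clean, and this verification is the only real obstacle to a fully rigorous argument.
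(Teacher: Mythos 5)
Your proposal is correct and follows essentially the same route as the paper: the upper bound comes from Theorem~\ref{linear_23}, and the lower bound $TC^G(S^n)\geq 3$ comes from Proposition~\ref{orbit_bound} together with the fact (Theorem~\ref{TC_2} / Example~\ref{TC_spheres}) that only contractible spaces and odd-dimensional homotopy spheres have topological complexity below~$3$. Your extra remarks on the CW structure of $S^n/G$ are a reasonable bit of added care but do not change the argument.
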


\begin{proof}
As Theorem \ref{TC_2} tells, the only spaces with topological complexity $2$ are odd-dimensional homotopy spheres. Therefore if either (1) or (2) holds, $TC^G(S^n)$ is at least $3$ by Proposition \ref{orbit_bound}, and then $3$ by Theorem \ref{linear_23}.
\end{proof}

\subsection{Cyclic groups}

We now specialize to the case $G = \zmod{p}$, where $p$ is a prime number. Note that in this case $G$\nobreakdash-\hspace{0pt}connectedness amounts to path-con\-nec\-ted\-ness of the fixed point set.

\begin{proposition}\label{TCG_2}
Let $S^n$ be a linear $\zmod{p}$\nobreakdash-\hspace{0pt}sphere such that $(S^n)^{\zmod{p}}\neq \emptyset$. Then $TC_{\zmod{p}}(S^n)=2$ if and only if both $S^n$ and $(S^n)^{\zmod{p}}$ are odd-dimensional spheres.
\end{proposition}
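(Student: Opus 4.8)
The plan is to prove the two implications separately, deriving the forward direction from the subgroup lower bound of Proposition \ref{CG_TCG_lower} and the reverse direction by constructing an explicit two-domain $\zmod{p}$\nobreakdash-\hspace{0pt}equivariant motion planner out of a $\zmod{p}$\nobreakdash-\hspace{0pt}equivariant complex structure.

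For the forward implication, suppose $TC_{\zmod{p}}(S^n)=2$. Since $\zmod{p}$ has only the subgroups $\{e\}$ and $\zmod{p}$, Proposition \ref{CG_TCG_lower} yields $TC(S^n)\leq 2$ and $TC\big((S^n)^{\zmod{p}}\big)\leq 2$. As the fixed point set of a linear action is a standard sphere $S^k$, Example \ref{TC_spheres} forces both $n$ and $k$ to be odd; the value $k=0$ is automatically excluded, since then $(S^n)^{\zmod{p}}=S^0$ is disconnected and $TC(S^0)=\infty$. Hence both $S^n$ and $(S^n)^{\zmod{p}}$ are odd-dimensional.

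For the reverse implication, assume $n$ and $k$ are both odd. I would decompose the representation as $\mathbb{R}^{n+1}=V_0\oplus V_1$, where $V_0=(\mathbb{R}^{n+1})^{\zmod{p}}$ has dimension $k+1$ and $V_1$ is its orthogonal complement of dimension $n-k$; because $\zmod{p}$ has prime order, the action on $S(V_1)$ is free. The parity assumptions make both $k+1$ and $n-k$ even, and I would exploit this to equip $V_0$ and $V_1$ with $\zmod{p}$\nobreakdash-\hspace{0pt}equivariant complex structures $J_0$ and $J_1$: on $V_0$ any complex structure is equivariant since the action is trivial, while on $V_1$ multiplication by $i$ in a $\zmod{p}$\nobreakdash-\hspace{0pt}invariant complex decomposition commutes with the action. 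Setting $J=J_0\oplus J_1$ and $R_\theta=\cos\theta\cdot I+\sin\theta\cdot J$ produces a one-parameter family of equivariant isometries with $R_\pi=-I$; equivalently, $x\mapsto Jx$ is a nowhere-vanishing $\zmod{p}$\nobreakdash-\hspace{0pt}equivariant tangent vector field on $S^n$, which is exactly what the appendix is meant to supply.

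With this in hand I build the planner on two invariant open sets covering $S^n\times S^n$. On $U_1=\{(x,y):x\neq -y\}$ I take the unique minimal geodesic, which is equivariant because the action is orthogonal and carries minimal geodesics to minimal geodesics. On $U_2=\{(x,y):x\neq y\}$ I first rotate $x$ to $-x$ along $\theta\mapsto R_\theta x$, $\theta\in[0,\pi]$, and then follow the minimal geodesic from $-x$ to $y$, which is well-defined since $x\neq y$ forces $-x\neq -y$; equivariance is immediate because $R_\theta$ commutes with the action. This gives $TC_{\zmod{p}}(S^n)\leq 2$, and the reverse inequality follows from non-contractibility of $S^n$ (or directly from Theorem \ref{linear_23}). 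The step I expect to be the main obstacle is the construction of the equivariant complex structure together with the verification that the rotational section on $U_2$ is genuinely continuous and equivariant; the parity bookkeeping ensuring both $V_0$ and $V_1$ are even-dimensional, and the fact that $R_\theta$ commutes with the $\zmod{p}$\nobreakdash-\hspace{0pt}action, are where the real content lies.
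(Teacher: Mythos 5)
Your argument is correct and follows the same skeleton as the paper's proof: the forward implication via Proposition \ref{CG_TCG_lower} together with the classification of spaces of topological complexity $2$, and the reverse implication via Farber's two-fold cover $U_1$, $U_2$, with minimal geodesics on $U_1$ and a rotation governed by a nowhere-vanishing equivariant vector field on $U_2$. The one place you genuinely diverge is the source of that vector field. The paper obtains it from Corollary \ref{vfields} in the appendix, i.e.\ from L\"uck's theorem on equivariant Euler characteristics combined with the weak gap hypothesis --- a result stated for arbitrary smooth $\zmod{p}$\nobreakdash-\hspace{0pt}spheres. You instead build it by hand: decompose $\mathbb{R}^{n+1}=V_0\oplus V_1$ into the fixed subspace and its complement, observe that the parity hypotheses make both summands even-dimensional, and assemble an equivariant orthogonal complex structure $J$ (trivial action on $V_0$, and on $V_1$ either complex irreducibles for odd $p$ or the action by $-I$ for $p=2$), so that $x\mapsto Jx$ is the desired field and $R_\theta=\cos\theta\cdot I+\sin\theta\cdot J$ gives the rotation. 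This is more elementary and entirely self-contained for linear actions, which is all Proposition \ref{TCG_2} needs; what it does not buy is the generality of the appendix, which applies to smooth non-linear actions as well. Two cosmetic differences: you run the two legs of the $U_2$ planner in the opposite order (rotate $x$ to $-x$, then geodesic to $y$, versus the paper's geodesic from $x$ to $-y$ followed by a rotation of $-y$ to $y$), which is immaterial; and your explicit exclusion of $k=0$ via disconnectedness of $S^0$ is a point the paper passes over more tersely. No gaps.
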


\begin{proof}
($\Rightarrow$) Since $(S^n)^{\zmod{p}}$ cannot be contractible,
\[ TC(S^n)=TC\big((S^n)^{\zmod{p}}\big)=2 \textnormal{ by Proposition \ref{CG_TCG_lower}}, \]
and both $S^n$ and $(S^n)^{\zmod{p}}$ are odd-dimensional spheres by Theorem \ref{TC_2}.

($\Leftarrow$) Consider Farber's two-fold cover $U_1$, $U_2$ of $S^n \times S^n$, i.e.,
\begin{align*}
U_1 &= \big\{(x, y) \in S^n \times S^n \;\big|\; x\neq -y\big\},\\
U_2 &= \big\{(x, y) \in S^n \times S^n \;\big|\; x\neq y\big\}.
\end{align*}
Clearly, both these sets are $\zmod{p}$\nobreakdash-\hspace{0pt}invariant. We claim that taking the usual motion planners $s_i \colon U_i \to PS^n$, $i=1$, $2$, will suffice, with the obvious minor alteration of using a nowhere-vanishing equivariant vector field for $U_2$. (That such a vector field exists follows from Corollary \ref{vfields}.) Let us briefly explain.

Recall that $s_1(x,y)$ is defined to be the unique shortest arc between $x$ and~$y$ passed with constant velocity. In order to conclude that $s_1$ is equivariant, it is enough to show that $gs_1(x,y)$ is the shortest path between $gx$ and $gy$ for any $g \in \zmod{p}$. But the action is linear, hence any $g \in \zmod{p}$ is an orthogonal transformation and the length $\ell$ of $gs_1(x,y)$ is the same as that of $s_1(x,y)$. Therefore:
\[ \ell\big(gs_1(x,y)\big)= \ell\big(s_1(x,y)\big) \leq \ell\big(g^{-1}s_1(gx,gy)\big) =\ell\big(s_1(gx,gy)\big), \]
which is what we wanted.

The motion planner $s_2$ is constructed in two steps. Given $(x,y) \in U_2$, first pass through the unique shortest arc between $x$ and $-y$ with constant velocity (it follows from the previous paragraph that this step is equivariant), and then move along the spherical arc
\[ \cos(\pi t) \cdot y + \sin(\pi t)\cdot v(y), \textnormal{ $t \in [0,1]$} \]
defined by a nowhere-vanishing equivariant vector field $v$.
\end{proof}

Now, Propositions \ref{CG_TCG_lower} and \ref{TCG_2} yield:

\begin{thm}\label{thm:ETC-of-spheres}
Let $S^n$ be a linear $\zmod{p}$\nobreakdash-\hspace{0pt}sphere with $(S^n)^{\zmod{p}} = S^k$, $0< k< n$.
\begin{enumerate}
\item If both $n$ and $k$ are odd, then $TC_{\zmod{p}}(S^n)=2$.
\item If either $n$ or $k$ are even, then $TC_{\zmod{p}}(S^n)=3$.
\end{enumerate}
\end{thm}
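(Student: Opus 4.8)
The plan is to deduce both statements directly from the characterization in Proposition~\ref{TCG_2}, supplementing it with the lower bounds of Proposition~\ref{CG_TCG_lower} and the range established in Theorem~\ref{linear_23}. Throughout I would note that $(S^n)^{\zmod{p}} = S^k$ is non-empty and path-connected because $0 < k$, so $S^n$ is a $\zmod{p}$-connected linear sphere and Theorem~\ref{linear_23} applies, giving $2 \leq TC_{\zmod{p}}(S^n) \leq 3$ from the outset.

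For part (i), I would observe that when both $n$ and $k$ are odd, both $S^n$ and its fixed point set $(S^n)^{\zmod{p}} = S^k$ are odd-dimensional spheres. The ``if'' direction of Proposition~\ref{TCG_2} then yields $TC_{\zmod{p}}(S^n) = 2$ immediately, and no further estimate is needed.

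For part (ii) the argument is equally short, and there are two interchangeable routes. The cleanest is to note that if $n$ or $k$ is even, then $S^n$ and $S^k$ are not both odd-dimensional spheres, so the contrapositive of Proposition~\ref{TCG_2} forces $TC_{\zmod{p}}(S^n) \neq 2$; combined with the upper bound $TC_{\zmod{p}}(S^n) \leq 3$ supplied by Theorem~\ref{linear_23}, this gives $TC_{\zmod{p}}(S^n) = 3$. Alternatively, one can produce the lower bound $3$ directly: by Proposition~\ref{CG_TCG_lower} we have both $TC(S^n) \leq TC_{\zmod{p}}(S^n)$ and $TC(S^k) \leq TC_{\zmod{p}}(S^n)$, and by Example~\ref{TC_spheres} an even-dimensional sphere has topological complexity $3$, so whichever of $n$, $k$ happens to be even already forces $TC_{\zmod{p}}(S^n) \geq 3$.

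I do not anticipate a genuine obstacle, since the substantive content has been isolated in Proposition~\ref{TCG_2} and what remains is essentially bookkeeping. The only points requiring minor care are confirming the hypotheses of Theorem~\ref{linear_23} (namely $\zmod{p}$-connectedness together with non-emptiness of the fixed point set), which hold precisely because $0 < k < n$, and ensuring that part (ii) is treated uniformly across both of its sub-cases ($n$ even and $k$ even) rather than only one of them.
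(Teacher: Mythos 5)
Your proposal is correct and follows essentially the same route as the paper, whose proof of this theorem is precisely the observation that Propositions~\ref{CG_TCG_lower} and~\ref{TCG_2} (together with the bound $TC_{\zmod{p}}(S^n)\leq 3$ from Theorem~\ref{linear_23}) yield the stated dichotomy. Your additional checks of $\zmod{p}$-connectedness and the two sub-cases of part~(ii) are sound and consistent with the paper's remarks.
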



We now turn attention to invariant topological complexity.

\begin{thm}\label{thm:ITC-of-spheres}
Let $S^n$ be a linear $\zmod{p}$\nobreakdash-\hspace{0pt}sphere with $(S^n)^{\zmod{p}} = S^k$, $0< k< n$. Suppose that either:
\begin{enumerate}
\item $k<n-2$,
\item $k=n-2$ and $n$ is even, or
\item $k=n-1$ and $n$ is odd.
\end{enumerate}
Then $TC^{\zmod{p}}(S^n)=3$.
\end{thm}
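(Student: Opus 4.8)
The plan is to pin down $TC^{\zmod{p}}(S^n)$ by combining the universal upper bound with a lower bound coming from either the fixed point set or the orbit space, according to the parity of $k$. The upper bound is immediate: since $(S^n)^{\zmod{p}}=S^k$ with $k\geq 1$ is non-empty and path-connected, $S^n$ is a linear $\zmod{p}$-connected sphere with non-empty fixed point set, so Theorem~\ref{linear_23} gives $TC^{\zmod{p}}(S^n)\leq 3$. It then remains to establish $TC^{\zmod{p}}(S^n)\geq 3$ under each of the three hypotheses, and for this I would lean on Corollary~\ref{TC_arbitrary}.

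First I would dispose of the situation where $k$ is even. In that case $(S^n)^{\zmod{p}}=S^k$ is an even-dimensional sphere, with $TC(S^k)=3$ (Example~\ref{TC_spheres}), so the first condition of Corollary~\ref{TC_arbitrary} applies and yields $TC^{\zmod{p}}(S^n)=3$ at once. The point is that hypotheses (ii) and (iii) both force $k$ to be even (namely $k=n-2$ with $n$ even, and $k=n-1$ with $n$ odd, respectively), so these two cases are settled immediately; the same remark disposes of hypothesis (i) whenever $k$ happens to be even. Consequently the only remaining situation is $k$ odd, and inspecting the three hypotheses shows that this can occur only under (i), so that additionally $k<n-2$, whence $n-k-1\geq 2$.

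The crux is therefore the case $k$ odd and $k<n-2$, where the fixed point set $S^k$ is an odd sphere and carries no useful information; here I would instead analyze the orbit space $S^n/\zmod{p}$. Writing $\mathbb{R}^{n+1}=\mathbb{R}^{k+1}\oplus V$ with $\mathbb{R}^{k+1}$ the trivial summand and $V^{\zmod{p}}=0$, linearity gives a $\zmod{p}$-homeomorphism $S^n\cong S^k \ast S(V)$ in which $\zmod{p}$ acts trivially on $S^k=S(\mathbb{R}^{k+1})$ and freely on $S(V)=S^{n-k-1}$ (freeness holds because a point with nontrivial stabilizer would be fixed by all of $\zmod{p}$, contradicting $V^{\zmod{p}}=0$). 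Since the action is trivial on the first join factor, passing to orbits should commute with the join, giving $S^n/\zmod{p}\cong S^k \ast \big(S(V)/\zmod{p}\big)\simeq \Sigma^{k+1}\big(S(V)/\zmod{p}\big)$. Now $S(V)/\zmod{p}$ is a free $\zmod{p}$-quotient of $S^{n-k-1}$ with $n-k-1\geq 2$, hence has fundamental group $\zmod{p}$ and therefore $H_1\cong\zmod{p}$; suspending $k+1$ times yields $\tilde H_{k+2}(S^n/\zmod{p})\cong\zmod{p}\neq 0$. Thus $S^n/\zmod{p}$ carries $p$-torsion in homology and is not contractible, so it is neither contractible nor homotopy equivalent to a sphere. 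By the second condition of Corollary~\ref{TC_arbitrary} (equivalently, by Proposition~\ref{orbit_bound} together with Theorem~\ref{TC_2}) we conclude $TC^{\zmod{p}}(S^n)=3$.

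I expect the main obstacle to be the identification of the homotopy type of the orbit space in this last case: specifically, verifying that taking $\zmod{p}$-orbits is compatible with the join decomposition, and that the resulting suspended lens-type space genuinely retains $p$-torsion in homology. Everything else reduces to bookkeeping on the parity of $k$ and to citing Corollary~\ref{TC_arbitrary}.
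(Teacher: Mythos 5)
Your proposal is correct, and its skeleton coincides with the paper's: the upper bound comes from Theorem~\ref{linear_23}, and the lower bound in every case is funnelled through Corollary~\ref{TC_arbitrary}, with hypotheses (ii) and (iii) (and the even-$k$ part of (i)) handled by the parity observation that $k$ is even. The only genuine divergence is in the remaining case $k<n-2$ with $k$ odd, where one must rule out that the orbit space is a homotopy sphere. The paper does this by citing a general structure result (\cite[Theorem 5.64]{Kawakubo1991}) asserting that $S^n/\zmod{p}$ has at least three non-zero $\zmod{p}$-homology groups whenever $0<k<n-2$. You instead compute directly: the equivariant join decomposition $S^n\cong S^k\ast S(V)$ with $\zmod{p}$ acting trivially on $S^k$ and freely on $S(V)=S^{n-k-1}$ (freeness is automatic for prime $p$ since $V^{\zmod{p}}=0$), the identification $S^n/\zmod{p}\cong S^k\ast\big(S(V)/\zmod{p}\big)\simeq\Sigma^{k+1}\big(S(V)/\zmod{p}\big)$, and the suspension isomorphism then exhibit $p$-torsion $\tilde H_{k+2}(S^n/\zmod{p})\cong\zmod{p}$, which no homotopy sphere can carry. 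This is a legitimate and self-contained replacement for the citation; the only points you correctly flag as needing verification -- that taking orbits commutes with the join when the action on one factor is trivial (true here, as all spaces are compact Hausdorff), and that $\pi_1\big(S(V)/\zmod{p}\big)\cong\zmod{p}$ since $n-k-1\geq 2$ -- are routine. The paper's citation buys uniformity over all $k<n-2$ without a case split on the parity of $k$; your computation buys transparency and avoids the external reference. Either way the conclusion follows from condition (ii) of Corollary~\ref{TC_arbitrary}.
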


\begin{remark}
The case $k=n-1$ for the involution given by reflection of the last coordinate was treated by Lubawski and Marzantowicz in \cite[Example~4.2]{Lubawski2014}.  Unfortunately, the argument therein is invalid. Let us explain.

Let $g \in \zmod{2}$ be the generator. The authors define a $(\zmod{2}\times \zmod{2})$\nobreakdash-\hspace{0pt}motion planner $s_1$ on the set
\[ U_1 = \big\{ (x, y)\in S^n\times S^n \;\big|\; x\neq -y \textnormal{ if both $x$, $y$ are fixed points}\big\} \]
as follows. Write $s'(x,y)$ for the unique shortest arc in the northern hemisphere that connects either $x$ or $gx$ with either $y$ or $gy$; suppose that the arc is between the points $\alpha x$ and $\beta y$, where \mbox{$\alpha$, $\beta \in \zmod{2}$.} Note, however, that if either $x$ or $y$ is a fixed point, say $y$ for clarity, then the arc from $\alpha x$ to $y$ is the same as the arc from $\alpha x$ to $gy$. In other words, even though $s'(x,y)$ is well-\hspace{0pt}defined, we have two choices for $\beta$: it does not really matter which element $\beta$ is. Therefore setting
\[ s_1(x,y) = \left(\alpha s'(x,y)_{\big|[0,\frac{1}{2}]}, \beta s'(x,y)_{\big|[\frac{1}{2}, 1]} \right)\]
does not produce a well-defined map.

\end{remark}

\begin{proof}[Proof of Theorem \ref{thm:ITC-of-spheres}]
A linear action on $S^n$ with a $k$-dimensional fixed point set can be seen as $\Sigma^{k+1}\big(S(V)\big)$, where $V$ denotes a free $(n-k)$-dimensional $\zmod{p}$-representation, $S(V)$ the unit sphere in~$V$, and $\Sigma^{k+1}$ the $(k+1)$-fold suspension. Thus the orbit space $S^n/G$ is homeomorphic to the suspension of the orbit space~$S(V)/G$. If $0<k<n-2$, then $S(V)/G$ is homeomorphic to a real projective space ($p=2$) or a lens space ($p>2$), hence in both cases $S^n/G$ has $n-k$ non-vanishing homology groups and the conclusion follows from Corollary \ref{TC_arbitrary}. For the other two cases Corollary \ref{TC_arbitrary} applies straightforwardly.

Alternatively, one can use methods of \cite[Chapter VII]{Bredon} to compute the homology of $S^n/G$. This approach applies to a broader class of actions (simplicial/cellular), which shows that in the discussed cases $TC^{\zmod{p}}(S^n)\geq 3$ holds in greater generality.
\end{proof}

The two cases omitted in Theorem \ref{thm:ITC-of-spheres} (actions with codimension-two fixed point sets on odd-dimensional spheres, and with codimension-one fixed point sets on even-dimensional spheres) are at present unsettled.


\section{Smooth $\zmod{p}$\nobreakdash-\hspace{0pt}spheres}\label{sec:Smooth-actions}



\textit{Results of this section are stated in terms of $TC^{\zmod{p}}$, but everything carries over verbatim to the case of $TC_{\zmod{p}}$.}\medskip

\noindent Recall that a smooth $\zmod{p}$\nobreakdash-\hspace{0pt}sphere $S^n$ is said to be \textit{semilinear} whenever:
\begin{enumerate}
\item[(1)] the fixed point set $(S^n)^{\zmod{p}}$ is homeomorphic to a standard sphere,
\item[(2)]  if $\textnormal{codim}\,(S^n)^{\zmod{p}}=2$, then $S^n - (S^n)^{\zmod{p}}$ is homotopy equivalent to $S^1$.(\footnote{This is a purely technical assumption and can be omitted more often than not.})
\end{enumerate}
Since semilinear spheres are known to be equivariantly homotopy equivalent to linear $\zmod{p}$\nobreakdash-\hspace{0pt}spheres (see \cite[Proposition 5.1]{Schultz1984}),  we have:

\begin{proposition}\label{prop:semilinear}
The results in Theorems \ref{thm:ETC-of-spheres} and \ref{thm:ITC-of-spheres} hold for semilinear $\zmod{p}$-spheres.
\end{proposition}


\begin{remark}
The equivariant homotopy equivalence in question can often be improved to equivariant \textit{topological} equivalence. For example, if
$(S^n)^{\zmod{p}}=S^k$ is a standard unknotted sphere of codimension greater than $[n/2]$, then the action on $S^n$ is (at least) topologically conjugate to a linear action.

Indeed, let $x\in S^k$ and write
$T_x S^n \cong \mathbb{R}^k\oplus D(\rho)$, where \mbox{$\rho\colon \zmod{p} \to SO(n-k)$} is a free representation. Since $\dim_{\mathbb{R}}\rho > k$, the normal bundle $\nu(S^k,S^n)$ is in stable range, thus is already trivial. Therefore there exists an invariant tubular neighbourhood of the fixed point set $X = S^k \times D(\rho)$. Decompose $S^n$ as $X\cup_{\partial} Y$. Note that the $n$\nobreakdash-\hspace{0pt}sphere in $(k+1)\mathbf{1}_{\zmod{p}} \oplus \rho$ admits a similar decomposition:
\[S\big((k+1)\mathbf{1}_{\zmod{p}} \oplus \rho\big) = S^k \times D(\rho) \cup_{S^k \times S(\rho)} D^{k+1}\times S(\rho).\]
By Alexander duality, $Y$ has the same homology as $D^{k+1}\times S(\rho)$. The inclusion map $S^k \times S(\rho)\hookrightarrow Y$ extends to a $\zmod{p}$\nobreakdash-\hspace{0pt}map $D^{k+1}\times S(\rho)\rightarrow Y$ (to see this, extend over a single cell from a $\zmod{p}$\nobreakdash-\hspace{0pt}CW-decomposition, then propagate by a free $\zmod{p}$\nobreakdash-\hspace{0pt}action), which in turn can be approximated by a $\zmod{p}$\nobreakdash-\hspace{0pt}embedding $f$ by a general position argument. Comparing the Mayer--Vietoris sequences of both decompositions shows that $f$ is a homotopy equivalence.

Now find a sequence of free $\zmod{p}$\nobreakdash-\hspace{0pt}surgeries $\operatorname{rel} \partial$ between $D^{k+1}\times S(\rho)$ and $Y$. This yields an equivariant $h$\nobreakdash-\hspace{0pt}cobordism $W$ of free manifolds; the diffeomorphism between them can be chosen to be $\zmod{p}$\nobreakdash-\hspace{0pt}equivariant if and only if its Whitehead torsion $w_W\in \operatorname{Wh}(\zmod{p})$ vanishes.

This fits into a broader picture. Consult \cite{Assadi1985} for more details on classification of actions with fixed point sets of high codimension.
\end{remark}

On the other hand, a general smooth $\zmod{p}$\nobreakdash-\hspace{0pt}sphere will typically have $TC^{\zmod{p}}$ at least $4$. Consider the following construction (cf. \cite{Hsiang1964}).

\begin{ex}\label{ex:highTC}
Let $n \geq 3$. Consider a smooth homology $n$\nobreakdash-\hspace{0pt}sphere $\Sigma$ that bounds a smooth contractible $(n+1)$\nobreakdash-\hspace{0pt}manifold, say $M$, and is not a standard sphere. (Such manifolds are well known to exist. Three-dimensional examples can be found among Brieskorn varieties, see for example \cite{Casson1981}; for higher dimensions, consult \cite[Theorem~3]{Kervaire1969}.) Take any non-trivial representation $\chi \colon \zmod{p} \to U(1)$, where $p$ is an odd prime, and endow the product \mbox{$N = M \times D(\chi)$} with the diagonal action by setting
\[ g(x,y) = (x, gy) \textnormal{ for any $g \in \zmod{p}$, $x \in M$ and $y \in D(\chi)$}. \]
Since $\dim N \geq 6$ and $N$ is contractible, the $h$\nobreakdash-\hspace{0pt}cobordism theorem asserts that $N$ is diffeomorphic to $D^{n+3}$.
Therefore restricting the action to the boundary yields a smooth $\zmod{p}$\nobreakdash-\hspace{0pt}sphere $S^{n+2}$ with $\Sigma$ as the fixed point set. The fundamental group of $\Sigma$ is clearly neither trivial nor free, therefore $\textnormal{cat}(\Sigma)\geq 4$ by \mbox{\cite[Corollary 1.2]{Dranishnikov2008}}, and, consequently,
\[ 4\leq\textnormal{cat}(\Sigma) \leq TC(\Sigma) = TC\big((S^n)^{\zmod{p}}\big) \leq TC^{\zmod{p}}(S^n). \]
\end{ex}

\begin{remark}
The fixed point set of the above action is of codimension two. This is not crucial: altering the construction by taking direct sums of $\chi$ allows for a fixed point set of any even codimension. On the other hand, it is easy to see that odd-codimension fixed point sets cannot arise at all: the normal bundle is almost complex (hence even-dimensional) when $p$ is odd.
\end{remark}

In fact, $TC^{\zmod{p}}$ can be arbitrarily high in the class of smooth $\zmod{p}$\nobreakdash-\hspace{0pt}spheres with non-empty fixed point sets. More precisely, we have:

\begin{proposition}\label{prop:ITC-arbitrarily-large}
Let $p > 2$ be a prime. For any $n\geq 5$ there exists a smooth $\zmod{p}$\nobreakdash-\hspace{0pt}sphere $S^n$ with $(S^n)^{\zmod{p}}\neq \emptyset$ such that $TC^{\zmod{p}}(S^n)\geq n-1$.
\end{proposition}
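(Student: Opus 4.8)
The plan is to refine the construction of Example~\ref{ex:highTC} so that the fixed point set has \emph{maximal} Lusternik--Schnirelmann category, and then to read off the bound from the inequalities relating $TC^{\zmod{p}}$ to the topological complexity, and hence the category, of the fixed point set. Concretely, given $n\geq 5$ I would first seek a smooth homology $(n-2)$\nobreakdash-\hspace{0pt}sphere $\Sigma$ that bounds a smooth contractible manifold $M$ (so $\dim M = n-1$), endow $N = M\times D(\chi)$ with the diagonal $\zmod{p}$\nobreakdash-\hspace{0pt}action for a non-trivial character $\chi\colon \zmod{p}\to U(1)$, and observe that $N$ is contractible of dimension $n+1\geq 6$, hence $N\cong D^{n+1}$ by the $h$\nobreakdash-\hspace{0pt}cobordism theorem. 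Restricting the action to $\partial N = S^n$ then produces a smooth $\zmod{p}$\nobreakdash-\hspace{0pt}sphere whose fixed point set is exactly $\Sigma$, of codimension two, in complete analogy with Example~\ref{ex:highTC}.

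Next I would assemble the lower bound. By Proposition~\ref{orbit_bound} applied to $X = S^n$, together with the universal estimate $\textnormal{cat}(\,\cdot\,)\leq TC(\,\cdot\,)$, we obtain
\[ TC^{\zmod{p}}(S^n) \geq TC\big((S^n)^{\zmod{p}}\big) = TC(\Sigma) \geq \textnormal{cat}(\Sigma). \]
Since $\Sigma$ is a closed $(n-2)$\nobreakdash-\hspace{0pt}manifold, its category is at most $(n-2)+1 = n-1$, so what is really needed is that $\Sigma$ \emph{attains} this maximum. Here I would invoke the theorem that a closed essential manifold has maximal Lusternik--Schnirelmann category --- a refinement, due to Dranishnikov and his collaborators, of the estimate already used in Example~\ref{ex:highTC}. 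Recall that a closed manifold is \emph{essential} when the classifying map $\Sigma\to B\pi_1(\Sigma)$ sends the fundamental class to a non-zero homology class. Thus everything reduces to producing, for each $n\geq 5$, an \emph{essential} homology $(n-2)$\nobreakdash-\hspace{0pt}sphere bounding a contractible manifold.

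This last point is the only genuinely new ingredient, and I expect it to be the main obstacle. Because a homology sphere has vanishing reduced homology away from the top degree, all of its positive-degree cup products vanish; consequently essentiality --- and with it the maximality of the category --- cannot be detected cohomologically and must be forced entirely through the fundamental group $G = \pi_1(\Sigma)$, via non-vanishing of the image of $[\Sigma]$ in $H_{n-2}(BG)$. I would construct such $\Sigma$ by choosing $G$ to be a finitely presented superperfect group and realizing it, by Kervaire's theorem \cite{Kervaire1969}, as the fundamental group of a homology $(n-2)$\nobreakdash-\hspace{0pt}sphere bounding a contractible manifold (which applies once $n-2\geq 5$), while arranging in addition that the fundamental class maps non-trivially into $H_{n-2}(BG)$. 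The borderline dimensions $n-2 = 3,4$ require separate treatment; for $n-2 = 3$ one may instead take an aspherical (for instance hyperbolic) homology $3$\nobreakdash-\hspace{0pt}sphere bounding a contractible manifold, which is automatically essential since its classifying map is a homotopy equivalence.

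In summary, the equivariant half of the argument is entirely parallel to Example~\ref{ex:highTC}, and the substance of the proof lies in the geometric construction of Step four: controlling the image of the fundamental class in $BG$ through the surgery-theoretic realization, and disposing of the low dimensions where Kervaire's theorem does not directly apply. I would regard the verification of essentiality within the restricted class of homology spheres bounding contractible manifolds, uniformly in the dimension, as the decisive difficulty.
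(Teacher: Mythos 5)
Your architecture is the same as the paper's: construct an essential homology sphere, get maximal LS~category from essentiality (the Katz--Rudyak theorem), run the construction of Example~\ref{ex:highTC} on it, and read off the bound from Proposition~\ref{orbit_bound} together with $\textnormal{cat}\leq TC$. (The only cosmetic difference is indexing: the paper fixes the dimension of the homology sphere and outputs $S^{n+2}$, you fix the dimension of the output sphere.) But the step you yourself flag as the decisive one --- producing, in each dimension, an \emph{essential} homology sphere --- is left as a genuine gap, and the mechanism you sketch for it would not work as stated. Kervaire's realization theorem lets you prescribe the fundamental group of a homology $m$\nobreakdash-sphere ($m\geq 5$) to be any finitely presented superperfect group $G$, but it gives no control whatsoever over the image of the fundamental class in $H_m(BG)$; worse, for a generic superperfect $G$ one may simply have $H_m(BG)=0$, in which case \emph{no} homology $m$\nobreakdash-sphere with fundamental group $G$ can be essential. ``Arranging in addition that the fundamental class maps non-trivially'' is precisely the content that requires an argument, not a side condition one can impose on Kervaire's construction.

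The paper fills this gap with two inputs you do not have. First, by Baumslag--Dyer--Heller there is a finitely presented group $G_m$ with $H_*(BG_m)\cong H_*(S^m)$, so $G_m$ is superperfect \emph{and} $H_m(BG_m)\cong\mathbb{Z}$ is non-trivial. Second, by Hausmann (via Quillen's plus construction and the Hurewicz isomorphism for the $(m-1)$\nobreakdash-connected space $BG_m^+$), every class in $H_m(BG_m)$ is represented by a smooth homology $m$\nobreakdash-sphere with fundamental group $G_m$; choosing a non-zero class yields an essential $\Sigma$. Finally, the difficulty you single out at the end --- achieving essentiality \emph{within the restricted class of homology spheres bounding contractible manifolds} --- is actually a non-issue: one does not need $\Sigma$ itself to bound. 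Kervaire's Theorem~3 supplies a homotopy sphere $\Sigma_h$ such that $\Sigma\#\Sigma_h$ bounds a smooth contractible manifold, and connected sum with a homotopy sphere changes neither the fundamental group nor essentiality nor the category. So the bounding condition is a free adjustment made at the very end, and the real obstruction is the homological one above. (Your remark about the borderline dimensions is apt, though: the Baumslag--Dyer--Heller/Hausmann route also needs the homology sphere to have dimension at least $5$, so the low output dimensions require the separate aspherical examples you mention.)
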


The crucial element in our approach to the proof of Proposition \ref{prop:ITC-arbitrarily-large} is the existence of essential homology spheres. While this may be a well known fact, we were unable to locate an explicit reference, hence we provide a proof for the convenience of the reader. (Recall that a closed orientable manifold $X$ is called \textit{essential} provided that there exists a map $f \colon X \to B\pi_1(X)$ such that $f_*[X] \neq 0$ in integral homology, where $B\pi_1(X)$ denotes the classifying space of $\pi_1(X)$ and $[X]$~the fundamental class of $X$.)

\begin{lemma}\label{lem:n-essential-homology-spheres}
For any $n\ge 3$ there exists an essential homology $n$-sphere.
\end{lemma}

\begin{proof}
In dimensions $3$ and $4$ there are known examples of aspherical homology spheres (see \cite{Ratcliffe}), and aspherical manifolds are clearly essential.

Let $n\geq 5$. A homology $n$-sphere $\Sigma$ with fundamental group isomorphic to a (superperfect) group $G$ defines an element in $\pi_n(BG)$ via Quillen's plus construction. Indeed, the map $\kappa\colon\Sigma \to BG$ classifying the fundamental group gives rise to $\kappa^+\colon S^n \to BG^+$, which depends only on the identification $\kappa_*\colon \pi_1(\Sigma) \to G$. According to \cite[Section 6]{Hausmann1978}, this defines a bijective correspondence between the set of $n$-homology spheres with $\pi_1 \cong G$ and $\pi_n(BG^+)$. It follows that a cycle in $H_n(BG)$ can be represented by a smooth homology sphere with fundamental group $G$ if and only if its image in $H_n(BG^+)$ is in the image of the Hurewicz homomorphism $\pi_n(BG^+) \to H_n(BG^+)$.

Now choose a finitely presented group $G_n$ such that $H_*(BG_n)\cong H_*(S^n)$. (Such a group exists by \cite[Corollary B1]{Baumslag1983}.) Observe that each~$G_n$ is a superperfect group. Then the space $BG_n^+$ is $(n-1)$-connected, hence the Hurewicz homomorphism is an isomorphism.
\end{proof}

\begin{proof}[Proof of Proposition \ref{prop:ITC-arbitrarily-large}]
Choose $\Sigma$ to be an essential $(n-2)$-homology sphere. Such manifolds are known to be of LS category $n-1$, see \cite[Theorem~4.1]{Katz2006}. Furthermore, by \cite[Theorem~3]{Kervaire1969}, there exists a (unique) homotopy $n$\nobreakdash-\hspace{0pt}sphere $\Sigma_h$ such that the connected sum $\Sigma\#\Sigma_h$ bounds a smooth contractible manifold. In order to conclude the proof carry out the construction outlined in Example \ref{ex:highTC}, starting from $\Sigma\#\Sigma_h$.
\end{proof}


Let us conclude with the following problem.

\begin{problem*}
Is it true that if $S^n$ is a smooth $\zmod{p}$\nobreakdash-\hspace{0pt}sphere with non-empty and path-connected fixed point set, then $S^n$ is equivariantly equivalent to a linear $\zmod{p}$\nobreakdash-\hspace{0pt}sphere if and only if $2 \leq TC^{\zmod{p}}(S^n) \leq 3$?
\end{problem*}

\noindent Of course the question is whether the ``($\Leftarrow$)'' implication holds true. By Smith Theory, the fixed point set $(S^n)^{\zmod{p}}$ is a $\textnormal{mod}\,p$ homology sphere, so in order to justify the statement in question it would suffice to prove that topological complexity of any $\textnormal{mod}\,p$ homology sphere is at least $4$. However, there exist simply-connected $\textnormal{mod}\,p$ homology spheres of LS category~$3$. At present we do not know how to attain the required bound (or whether it is possible at all) for such spaces.
\appendix

\section{Nowhere-vanishing equivariant vector fields}

Let $G$ be a finite group. Write $\textsc{ccs}(G)$ for the set of conjugacy classes of subgroups of $G$. Given a subgroup $H \subseteq G$, denote by $W_G H = N_G H/H$ the \textit{Weyl group} of $H$ in $G$. Furthermore, for a $G$\nobreakdash-\hspace{0pt}space $X$, set
\begin{equation*}
\begin{split}
X^{>H} &= \{ x \in X \;|\; H \varsubsetneq G_x \},
\end{split}
\end{equation*}
where $G_x$ stands for the isotropy group of $x \in X$.

The \textit{equivariant Euler characteristic} of a finite $G$\nobreakdash-\hspace{0pt}CW-complex $X$ is defined by
\[ \chi^G(X) = \sum_{(H) \in \textsc{ccs}(G)} \sum_{n=0}^{\dim X} (-1)^n \nu\big(n, (H)\big)[G/H], \]
where $\nu\big(n, (H)\big)$ denotes the number of $n$\nobreakdash-\hspace{0pt}cells of type $(H)$. (This is an element of the Burnside ring of $G$.) It is not difficult to see that $\chi^G$ can be expressed in terms of classical Euler characteristic in the following way:
\begin{equation}\label{equiv_Euler}
\chi^G(X) = \sum_{(H) \in \textsc{ccs}(G)} \chi\left((X^H, X^{>H})\big/W_G H\right)[G/H].\tag{$\star$}
\end{equation}

Recall that a $G$\nobreakdash-\hspace{0pt}space $X$ satisfies the \textit{weak gap hypothesis} if
\[ \dim X^{>G_x} \leq \dim X^{G_x}-2 \textnormal{ for any $x \in X$.} \]

\begin{thm}[{\cite[Remark 6.8]{Luck2003}}]
Let $G$ be a finite group, $M$ a closed $G$\nobreakdash-\hspace{0pt}manifold. If $\chi^G(M)=0$ and $M$ satisfies the weak gap hypothesis, then $M$~admits a nowhere-vanishing equivariant vector field.
\end{thm}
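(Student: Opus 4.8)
The plan is to read this as an equivariant Poincaré--Hopf theorem and to construct the field one orbit type at a time. A nowhere-vanishing equivariant vector field on $M$ is the same datum as a $G$-section of the unit tangent sphere bundle $S(TM)\to M$, taken with respect to an invariant Riemannian metric, and I would build such a section by induction over the orbit-type strata of $M$, ordered by \emph{decreasing} isotropy (most singular strata first). The inductive claim is that a nowhere-zero $G$-field defined on an invariant neighbourhood of $M^{>H}$ can always be extended across the stratum of orbit type $(H)$.

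The observation that makes the induction tractable is a local rigidity along the fixed sets. If $x\in M^H$ and $v$ is equivariant, then $v(x)$ is fixed by $H$, so $v(x)\in (T_xM)^H = T_x(M^H)$, since the normal representation of $M^H$ in $M$ carries no nonzero $H$-fixed vector. Thus the restriction of any equivariant field to $M^H$ is automatically tangent to $M^H$ and $W_G H$-equivariant, and $W_G H$ acts freely on $M^H - M^{>H}$. Extending the field over the $(H)$-stratum therefore reduces, after passing to this free quotient, to extending a nowhere-zero field over the compact pair $(M^H, M^{>H})\big/W_G H$. Because the fibre $S^{\dim M^H-1}$ is highly connected, this relative extension problem has a single top-dimensional obstruction, namely the relative Euler characteristic $\chi\big((M^H, M^{>H})/W_G H\big)$.

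At this point the hypothesis $\chi^G(M)=0$ enters. Since the transitive $G$-sets $[G/H]$ form a $\mathbb{Z}$-basis of the Burnside ring, formula \eqref{equiv_Euler} shows that $\chi^G(M)=0$ is equivalent to the vanishing of $\chi\big((M^H, M^{>H})/W_G H\big)$ for \emph{every} conjugacy class $(H)$. Hence each local obstruction vanishes, and every extension problem in the induction carries total index zero. The weak gap hypothesis is what converts this numerical vanishing into an honest nowhere-zero field: forcing the more-singular strata to sit in codimension at least two inside each fixed set keeps $M^H - M^{>H}$ connected, so the finitely many zeros of a generic extension can be slid together and cancelled in pairs of opposite index, their total index being zero.

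The step I expect to be the main obstacle is the normal and gluing step: promoting the tangential nowhere-zero field just built on $M^H$ to a genuine $G$-equivariant field that is nowhere zero on a \emph{full} invariant tubular neighbourhood of the $(H)$-stratum and matches the previously constructed field near $M^{>H}$. Concretely one fixes an invariant tubular neighbourhood, uses the splitting $TM|_{M^H} = T(M^H)\oplus \nu$, and extends so that the $T(M^H)$-component dominates in the normal directions; it is precisely the codimension-$\geq 2$ control supplied by the weak gap hypothesis that guarantees enough room to perform this extension, and the inter-stratum gluing, without creating new zeros. Assembling these compatible extensions across all orbit types then yields the desired global nowhere-vanishing equivariant vector field.
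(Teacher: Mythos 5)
The paper offers no proof of this statement --- it is imported directly from L\"uck's work --- so there is no in-text argument to compare yours against. That said, your sketch reconstructs the standard obstruction-theoretic proof behind the cited result: induction over orbit types by decreasing isotropy, tangency of equivariant fields along $M^H$ because $(T_xM)^H = T_x(M^H)$, the relative Euler characteristic as the single top-dimensional obstruction, the Burnside-ring basis argument converting $\chi^G(M)=0$ into the vanishing of each $\chi\big((M^H, M^{>H})/W_G H\big)$, and the weak gap hypothesis supplying the codimension-two room in which to cancel zeros. One comment on emphasis: the step you single out as the main obstacle, extending from $M^H$ into an invariant tubular neighbourhood, is actually the easy part. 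With an invariant metric, define the field at $\exp_x(n)$, for $n$ in the normal disc bundle, as the horizontal lift of $v(x)$ plus the tautological radial vector $n$; this is equivariant and nowhere zero, since the tangential summand is nonzero on the zero section and the normal summand is nonzero off it. No codimension hypothesis is needed there.

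The one genuine gap is in the cancellation step. Sliding zeros together and cancelling them ``their total index being zero'' only works within a single connected component of $(M^H - M^{>H})/W_G H$, whereas the hypothesis $\chi^G(M)=0$ only kills, for each $(H)$, the \emph{sum} of the relative Euler characteristics over the components. If some $M^H$ is disconnected, your argument as written does not exclude components carrying nonzero individual index; in the extreme case of a zero-dimensional component of $M^H$ lying outside $M^{>H}$, no nonzero $H$-fixed tangent vector exists at such a point at all, yet its contribution could in principle be offset by another component. To close this you must either argue that each component's contribution vanishes separately or work with the finer, component-indexed (universal) equivariant Euler characteristic, which is what the cited source actually does. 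For the application in this paper the point is harmless, since the fixed point sets considered are connected spheres.
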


\begin{cor}\label{vfields}
Let $p$ be a prime. Any odd-dimensional smooth $\zmod{p}$\nobreakdash-\hspace{0pt}sphere~$S^n$ with $(S^n)^{\zmod{p}}$ also an odd-dimensional sphere admits a nowhere-vanishing equivariant vector field.
\end{cor}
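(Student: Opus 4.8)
The plan is to verify the two hypotheses of the theorem immediately preceding the corollary for the closed $\zmod{p}$-manifold $M = S^n$. Write $(S^n)^{\zmod{p}} = S^k$ with both $n$ and $k$ odd, and assume $0 < k < n$; otherwise the action is trivial and any nowhere-vanishing vector field on the odd-dimensional sphere $S^n$ is automatically equivariant. The two crucial observations I would record at the outset are: since $n$ and $k$ are both odd, the codimension $n-k$ is a positive \emph{even} integer, hence $n - k \geq 2$; and since $\zmod{p}$ has no proper non-trivial subgroups, every point of $S^n$ has isotropy either $\{e\}$ or all of $\zmod{p}$, so the action is free away from $S^k$. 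Finally, being a smooth $\zmod{p}$-manifold, $S^n$ carries a finite $\zmod{p}$-CW structure (e.g.\ by equivariant triangulation), so $\chi^{\zmod{p}}$ is defined.

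The weak gap hypothesis I expect to be immediate. A point $x \in S^n$ has $G_x \in \{\{e\}, \zmod{p}\}$. If $x \in S^k$, then $G_x = \zmod{p}$, the set $(S^n)^{>\zmod{p}}$ is empty, and the inequality holds vacuously. If $x \notin S^k$, then $G_x = \{e\}$, the component $(S^n)^{G_x}(x)$ is all of $S^n$ of dimension $n$, while the more singular part $(S^n)^{>\{e\}} = S^k$ has dimension $k \leq n - 2$; this is exactly the required codimension-two gap, which is guaranteed by the parity argument above.

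It then remains to check that $\chi^{\zmod{p}}(S^n) = 0$ via formula \eqref{equiv_Euler}. Since $\textsc{ccs}(\zmod{p}) = \{(\{e\}), (\zmod{p})\}$, the sum collapses to two terms. For $H = \zmod{p}$ the Weyl group $W_{\zmod{p}}\zmod{p}$ is trivial and $(S^n)^{>\zmod{p}} = \emptyset$, so the coefficient of $[\zmod{p}/\zmod{p}]$ is $\chi(S^k) = 0$, as $k$ is odd. For $H = \{e\}$ we have $W_{\zmod{p}}\{e\} = \zmod{p}$ and $(S^n)^{>\{e\}} = S^k$, so the coefficient of $[\zmod{p}/\{e\}]$ is the relative Euler characteristic $\chi\big((S^n, S^k)/\zmod{p}\big) = \chi\big(S^n/\zmod{p}, S^k\big)$. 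I would identify this with the compactly supported Euler characteristic $\chi_c\big((S^n \setminus S^k)/\zmod{p}\big)$ and, using multiplicativity of $\chi_c$ over the free $\zmod{p}$-action together with additivity over the closed submanifold $S^k$, evaluate it as $\tfrac{1}{p}\big(\chi(S^n) - \chi(S^k)\big) = 0$, both spheres being odd-dimensional. Hence both coefficients vanish, $\chi^{\zmod{p}}(S^n) = 0$, and the theorem yields the desired vector field.

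The one step that demands genuine care, and which I regard as the main obstacle, is the relative Euler characteristic in the $H = \{e\}$ term: specifically the identification $\chi\big((S^n,S^k)/\zmod{p}\big) = \chi_c\big((S^n\setminus S^k)/\zmod{p}\big)$ and its reduction to $\tfrac{1}{p}\big(\chi(S^n)-\chi(S^k)\big)$. I would justify this by excision on the $\zmod{p}$-CW pair followed by multiplicativity of the compactly supported Euler characteristic under the free action on the complement of the fixed set. Everything else is bookkeeping that follows from the fact that a group of prime order admits only the two extreme isotropy types, so that formula \eqref{equiv_Euler} has just two terms, each killed by the oddness of $n$ and of $k$.
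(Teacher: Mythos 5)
Your proposal is correct and follows essentially the same route as the paper: reduce to the non-trivial case, check the weak gap hypothesis via the codimension-$\geq 2$ observation, and verify $\chi^{\zmod{p}}(S^n)=0$ using formula \eqref{equiv_Euler}. The only difference is that the paper dismisses the Euler characteristic computation as following ``immediately'' from \eqref{equiv_Euler}, whereas you spell out the two terms (the $\chi(S^k)=0$ term and the $\tfrac{1}{p}\bigl(\chi(S^n)-\chi(S^k)\bigr)=0$ term for the free part), which is a correct elaboration of the same argument.
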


\begin{proof}
If the action is trivial, the claimed result is known classically. Assume that the action is non-trivial, so that the codimension of the fixed point set is at least $2$. The weak gap hypothesis amounts to two inequalities:
\begin{enumerate}
\item [(1)]$\dim\,(S^n)^{\zmod{p}} \leq \dim S^n - 2$,
\item [(2)]$\dim \emptyset \leq \dim\, (S^n)^{\zmod{p}}-2$,
\end{enumerate}
both of which are trivially true. It remains to verify that $\chi^{\zmod{p}}(S^n)=0$, but this follows immediately from~(\ref{equiv_Euler}).
\end{proof}

\noindent\noindent\textbf{Acknowledgements.} Our approach to Example \ref{ex:highTC} was inspired by \cite{GomezLarranaga1992} where it was shown that $3$-dimensional non-simply-connected homology spheres have LS category $4$. From our point of view that paper is now superseded by \cite{Dranishnikov2008}, but we feel that a reference is in order. We would also like to express gratitude to W. Marzantowicz, for bringing problems tackled in this paper to our attention, and the referee, whose suggestions improved the manuscript.

\bibliographystyle{amsalpha}
\bibliography{Equivariant-TC.bib}

\end{document}